\newcommand\reallywidehat[1]{%
\savestack{\tmpbox}{\stretchto{%
  \scaleto{%
    \scalerel*[\widthof{\ensuremath{#1}}]{\kern-.6pt\bigwedge\kern-.6pt}%
    {\rule[-\textheight/2]{1ex}{\textheight}}
  }{\textheight}%
}{0.5ex}}%
\stackon[1pt]{#1}{\tmpbox}%
}
\def\Ddots{\mathinner{\mkern1mu\raise\p@
\vbox{\kern
7\p@\hbox{.}}\mkern1mu\raise
4\p@\hbox{.}\mkern1mu\raise
7\p@\hbox{.}\mkern1mu}}
\newtheorem{thm}{Theorem}[section]
\Crefname{thm}{Theorem}{Theorems}
\newtheorem{lem}[thm]{Lemma}
\newtheorem{cor}[thm]{Corollary}
\newtheorem{prop}[thm]{Proposition}
\theoremstyle{definition}
\newtheorem{definition}[thm]{Definition}
\newtheorem{exam}[thm]{Example}
\theoremstyle{remark}
\newtheorem{rem}[thm]{Remark}
\newcommand{\conv}{\mathrm{conv}}
\newcommand{\commentout}[1]{}
\DeclareMathOperator{\vol}{vol}
\newcommand{\C}{\mathbb{C}}
\newcommand{\N}{\mathbb{N}}
\newcommand{\Q}{\mathbb{Q}}
\newcommand{\R}{\mathbb{R}}
\newcommand{\Z}{\mathbb{Z}}
\renewcommand{\phi}{\varphi}
\renewcommand{\epsilon}{\varepsilon}
\DeclareMathOperator{\ehr}{ehr}
\begin{document}

\title{Ehrhart Theory over Abelian Group Rings} 

\author[R. Davis]{Robert Davis}
\address{
	Colgate University \\
	Hamilton, NY, USA }
\email{rdavis@colgate.edu}

\author[J. A. De Loera]{Jes\'us A. De Loera}
\address{University of California, Davis, CA, USA}
\email{jadeloera@ucdavis.edu}

\author[A. Garber]{Alexey Garber}
\address{
University of Texas Rio Grande Valley, 
Brownsville, TX, USA}
\email{alexey.garber@utrgv.edu}

\author[K. Jochemko]{Katharina Jochemko}
\address{KTH Royal Institute of Technology, Stockholm, Sweden}
\email{jochemko@kth.se}

\author[M. Omar]{Mohamed Omar}
\address{York University, Toronto, Canada}
\email{omarmo@yorku.ca}

\author[J. Yu]{Josephine Yu}
\address{Georgia Institute of Technology, Atlanta, GA, USA}
\email{jyu@math.gatech.edu}

\keywords{
    Ehrhart theory, lattice point enumeration, generating functions, convex polytopes}
    \subjclass[2020]{52B20; 05A15; 52B45}

\date{\today}

\begin{abstract} 
We introduce a unified framework for Ehrhart theory in which lattice point enumerators take coefficients in an Abelian group ring, encoding substantially richer algebraic data than classical counts.  We prove that fundamental results of Ehrhart theory extend to this setting through a generalized Brion theorem, including rational generating functions, reciprocity phenomena, connections to volume, and vertex-cone decompositions. 
We further show how to derive $q$-enumerative and weighted theories from this setting, recasting several major refinements of Ehrhart theory as consequences of a single algebraic mechanism. We also show how our framework combines with equivariant Ehrhart theory. 
\end{abstract}
\maketitle

\section{Introduction}

Let $P$ be a $d$-dimensional lattice polytope, i.e., a $d$-dimensional convex polytope with vertices in $\Z^d$.
The {\it Ehrhart function} 
of $P$ is defined by $\ehr(P;n) = |nP\cap \Z^d|$ for nonnegative integers $n$.
The {\it Ehrhart series} of $P$ is a formal power series in the variable $t$ given by
$$\mathcal E(P;t) = \sum_{n\geq 0}\ehr(P;n)t^n=\sum_{n\geq 0} |nP\cap \Z^d|t^n.$$  
See \cite{barvinokzurichbook, beckrobins,stanley-1996-comb-comm-algebra,stanley} for details. Ehrhart functions and their series have been the subject of research for more than 60 years \cite{Ehrhart1962}, with numerous applications in combinatorics and algebra (see \cite{barvinokzurichbook, beckrobins,HibiBook} and the 
references therein). In this work, we present a new generalization of Ehrhart functions by changing the coefficients of the Ehrhart series and related generating functions. Specifically, for a given Abelian group $G$ and a commutative ring $R$ with unity, we study weighted lattice point enumeration, where each lattice point is assigned a weight in the group ring $R[G]$.

\begin{definition}
Let $G$ be an Abelian group, $\phi:\Z^d \to G$ be a homomorphism, and $R$ be a 
commutative ring with unity. We define the {\it $\phi$-Ehrhart function}  of $P$ and the {\it $\phi$-Ehrhart series} of $P$ as
$$\ehr_R(P,\varphi;n) = \sum_{\alpha \in nP \cap \Z^d}\varphi(\alpha) \in R[G] \quad \text{and} \quad \mathcal E_R(P,\varphi;t) = \sum_{n\geq 0}\ehr(P,\phi;n)t^n \in R[G][[t]],$$
respectively. In the interest of simplicity, we will routinely use $\ehr(P,\varphi;n)$ and $\mathcal E(P,\varphi;t)$ when $R$ is understood.

\end{definition}

We study the $\phi$-Ehrhart series and its properties. We will see that it unifies several earlier variations in Ehrhart theory, in particular results that take into account weights on the lattice points. We begin with an example that reveals the richness of this definition. 

\begin{exam}\label{ex:first example}
Consider the polytope $P = [0,1]^2$, the group $G = C_3=\{1,\tau,\tau^2\}$, and the ring $R = \Z$. Using the homomorphism $\phi$ defined by $\phi(e_1)=\phi(e_2)=\tau$ (see Figure~\ref{fig:modularity example}) we obtain
\[
    \ehr(P,\varphi;n) =  (1+\tau+\tau^2+\cdots+\tau^n)^2 \quad 
    \text{and} \quad \mathcal{E}(P,\varphi;t) = \frac{1+\tau t}{(1-t)(1-\tau t)(1-\tau^2t)}.
\]
In the first identity the Ehrhart function is an element of $R[G]$ for each value of $n$ (the reader should not be confused as $\tau \notin {\mathbb C}$). To see why the second identity is true, one might be tempted to abbreviate $\ehr(P,\phi;n)$ as $\left(\frac{1-\tau^{n+1}}{1-\tau}\right)^2$, however this can not be done in $R[G]$ because $1-\tau$ is a zero divisor. Instead, directly compute that
\[
\mathcal{E}(P,\phi;t) = \sum_{n \geq 0} \left(\sum_{i=0}^n \tau^i\right) \left(\sum_{j=0}^n \tau^j\right) t^n = \sum_{n \geq 0} \sum_{0 \leq i,j \leq n} \tau^{i+j} t^n \\
= \sum_{i,j \geq 0} \tau^{i+j} \sum_{n \geq \max(i,j)} t^n.
\]
Summing each inner sum as $\frac{t^{\max(i,j)}}{1-t}$ gives
\[
\mathcal{E}(P,\phi;t) = \frac{1}{1-t} \sum_{i,j \geq 0} \tau^{i+j}t^{\max(i,j)} = \frac{1}{1-t} \left( \sum_{i \geq j \geq 0} \tau^{i+j}t^i + \sum_{j>i \geq 0} \tau^{i+j} t^j\right).
\]
For the first sum we have
\[
\sum_{i \geq j \geq 0} \tau^{i+j} t^i = \sum_{j \geq 0} \tau^j \sum_{i \geq j} (\tau t)^i = \frac{1}{1-\tau t} \sum_{j \geq 0} \tau^j (\tau t)^j = \frac{1}{(1-\tau t)(1-\tau^2 t)},
\]
and a similar computation yields $\frac{\tau t}{(1-\tau t)(1-\tau^2 t)}$ for the second sum.  Adding these two yields 
\[
\mathcal{E}(P,\phi;t) = \frac{1+\tau t}{(1-t)(1-\tau t)(1-\tau^2t)}.
\]

\begin{figure}
    \centering
\begin{tikzpicture}[scale=1]
   \draw [thick, fill=gray!15] (0,0) rectangle (4,4);
   \draw [thick, fill=gray!30] (0,0) rectangle (3,3);
   \draw [thick, fill=gray!45] (0,0) rectangle (2,2);
   \draw [thick, fill=gray!60] (0,0) rectangle (1,1);

   \draw[thin,opacity=0.2] (-0.5,-0.5) grid (4.5,4.5);

   \draw [thick,<->] (-0.5,0) -- (4.5,0);
   \draw [thick,<->] (0,-0.5) -- (0,4.5);
   
   \foreach \i in {(0.15,0.15),(3.15,0.15),(0.15,3.15),(3.15,3.15),(2.15,1.15),(1.15,2.15),(4.15,2.15),(2.15,4.15)} {
        \node at \i {\tiny{$1$}};
    }

    \foreach \i in {(1.15,0.15),(0.15,1.15),(4.15,0.15),(3.15,1.15),(2.15,2.15),(1.15,3.15),(0.15,4.15),(4.15,3.15),(3.15,4.15)} {
        \node at \i {\tiny{$\tau$}};
    }

    \foreach \i in {(2.23,0.2),(1.23,1.2),(0.23,2.2),(4.23,1.2),(3.23,2.2),(2.23,3.2),(1.23,4.2),(4.23,4.2)} {
        \node at \i {\tiny{$\tau^2$}};
}
\end{tikzpicture}

\caption{The first 
dilations of the unit square in $\Z^2$. Each 
point $v\in\Z^2$ is labeled with the $\varphi(v)$ from Example~\ref{ex:first example}.}\label{fig:modularity example}

\end{figure}

If we were to use $R = \Z_2$ instead, we may note a repeated factor $1+\tau t$ in both numerator and denominator of $\mathcal{E}(P,\varphi;t)$ as 
\[
\mathcal{E}(P,\varphi;t) = \frac{1+\tau t}{(1+t)(1+\tau t)(1+\tau^2t)}.
\]
\end{exam}

Following Example \ref{ex:first example} we could more generally 
let $G = C_r = \langle \tau \mid \tau^r = 1\rangle$ be the cyclic group of order $r$, and let $R = \Z$.
Let $\varphi: \Z^d \to G$ be a homomorphism that sends each standard basis vector to $\tau$.
The kernel of $\varphi$ has cosets $K_0,\dots,K_{r-1}$ and 
it follows that for $\theta_i(nP):=|nP\cap K_i|$,
\[
    \ehr(P,\varphi;n) = \sum_{i=0}^{r-1} \theta_i(nP)\tau^i.
\]
Table~\ref{tab:group-ring-table} summarizes the interpretations of the coefficients of $\tau^i$ in $\ehr(P,\varphi;n)$ for several natural choices of group $G$ and ring $R$. For example, one recovers counting of lattice points modulo $p$.
\begin{table}[h]
\centering
\scalebox{1}{
\begin{tabular}{|Sc|Sc|Sc|Sc|} \hline
\diagbox{$G$}{$R$}   & $\Z$ & $\Z_2$                                         & $\Z_m$                                                                 \\ \hline
            $\{1\}$  & $|nP \cap \Z^d|$& parity of $|nP \cap \Z^d|$                     & $|nP \cap \Z^d| \pmod m$                                   \\ \hline
             $C_r$ & $\displaystyle{\sum_{i=0}^{r-1}|nP \cap K_i|}\tau^i$ & parities of each $|nP \cap K_i|$  & $\displaystyle{\sum_{i=0}^{r-1} (|nP \cap K_i| \pmod m)\tau^i}$  \\ \hline
\end{tabular}}
\caption{Interpretations of $\ehr(P,\varphi;n)$ for some pairs of $G$ and $R$.
Each uses the homomorphism $\varphi: \Z^d \to G$ sending standard basis vectors to the same generator $\tau\in G$. We use $K_i$ to denote the coset $ie_1 + \ker(\varphi)$ of $\Z^d/\ker(\varphi)$.}\label{tab:group-ring-table}
\end{table}

\subsection*{Our Contributions}
Our  first main result says that the $\phi$-Ehrhart series is a rational function.  We work with a general construction; for a rational polyhedron $Q \subset \R^d$, let the {\it $\varphi$-integer point transform} be
\[
\sigma_R(Q,\varphi;\mathbf{z}) = 
\sum_{\alpha \in Q \cap \Z^d} \varphi(\alpha)\mathbf{z}^\alpha
\]
where $\mathbf{z}^{\alpha} = z_1^{\alpha_1}\cdots z_d^{\alpha_d}$.
This is a formal power series in variables $z_1,\dots,z_d$ with coefficients in $R[G]$. These formal power series form a module $R[G][[\mathbf{z}]]$ over the ring of Laurent polynomials $R[G][\mathbf{z}^{\pm}]:=R[G][z_1,\dots,z_d,z_1^{-1},\dots,z_d^{-1}]$.
We again write $\sigma(Q,\varphi;\mathbf{z})$ when $R$ is understood. 

Let $\mathcal C\subset \R^{d}$ be the half-open rational simplicial cone obtained by removing $k$ of the facets
$$\mathcal C:=\left\{\sum_{i=1}^{d} \lambda_i u_i \mid  \lambda_1,\ldots,\lambda_k>0,\, \lambda_{k+1},\ldots,\lambda_d\geq 0\right\},$$
where $\{u_1,\dots,u_{d}\}$ is a linearly independent set of integer vectors.
Note that one may let any $k$ of the inequalities $\lambda_i \geq 0$ be strict and not just the first $k$, as this amounts to a reindexing of $\{u_1,\dots,u_d\}$.
The {\it fundamental parallelepiped}, $\Pi$, of this cone is
\begin{equation}
    \label{eqn:fundamental}
\Pi:=\left\{\sum_{i=1}^{d} \lambda_i u_i \mid   \lambda_1,\ldots,\lambda_k\in (0,1],\, \lambda_{k+1},\ldots,\lambda_d \in [0,1)\right\}.
\end{equation} 

\begin{thm}[Rationality]
\label{lem:brion_group}
For a half-open rational simplicial cone $\mathcal{C}$ with apex at the origin, generated by $u_1,u_2,\ldots,u_d$, we have
\[
\sigma(\mathcal{C}, \varphi; \mathbf{z}) =   \frac{\sum_{\alpha \in \Pi\cap\Z^{d}} \varphi(\alpha) \mathbf{z}^\alpha}{\prod_{i=1}^{d}(1-\varphi(u_i)\mathbf{z}^{u_i})}.
\]
\end{thm}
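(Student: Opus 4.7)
The plan is to mimic the classical tiling proof of Brion's formula for the integer-point transform, with the single new ingredient being the multiplicativity of $\varphi$ as a group homomorphism. The key geometric fact is that every lattice point of the half-open simplicial cone $\mathcal{C}$ admits a unique decomposition of the form
\[
\alpha = \pi + \sum_{i=1}^{d} n_i u_i, \qquad \pi \in \Pi \cap \Z^d,\ n_i \in \Z_{\geq 0}.
\]
This is the standard tiling of $\mathcal{C}$ by lattice translates of the fundamental parallelepiped $\Pi$ defined in \eqref{eqn:fundamental}; the specific open/closed conventions on $\mathcal{C}$ and $\Pi$ are set up precisely so that the $u_i$-translates of $\Pi$ tile $\mathcal{C}$ with no overlap and no gaps. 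I would either cite this as a standard fact or give a short direct argument using the unique barycentric coordinates of $\alpha$ with respect to $u_1,\dots,u_d$ and splitting each coordinate into its floor and fractional part (with the appropriate half-open convention on which side of the integer it goes).

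Next, I would use that $\varphi:\Z^d\to G$ is a group homomorphism to turn this additive decomposition into a multiplicative one in $R[G]$: $\varphi(\alpha) = \varphi(\pi) \prod_{i=1}^{d} \varphi(u_i)^{n_i}$. Combined with $\mathbf{z}^\alpha = \mathbf{z}^\pi \prod_{i=1}^{d} (\mathbf{z}^{u_i})^{n_i}$, this lets me factor the transform as
\[
\sigma(\mathcal{C}, \varphi; \mathbf{z}) = \sum_{\pi \in \Pi \cap \Z^d} \varphi(\pi)\, \mathbf{z}^\pi \cdot \prod_{i=1}^{d} \sum_{n_i \geq 0} \bigl(\varphi(u_i)\, \mathbf{z}^{u_i}\bigr)^{n_i},
\]
and then recognize each inner geometric series as $1/(1-\varphi(u_i)\mathbf{z}^{u_i})$, which yields the desired closed form.

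The main technical point to take care of is justifying the rearrangement and the geometric series identity inside the module $R[G][[\mathbf{z}]]$, since the coefficients now live in a group ring rather than in $\Z$ or a field. Here I would note that, because the $u_i$ lie in a pointed-cone direction (after passing to the lineality-free quotient in the case $k<d$ if needed, or equivalently working in the appropriate localized Laurent series ring in the directions $\mathbf{z}^{u_i}$), each $\mathbf{z}^{u_i}$ is a legitimate formal variable, so $(1-\varphi(u_i)\mathbf{z}^{u_i})$ is invertible in $R[G][[\mathbf{z}]]$ with inverse $\sum_{n\geq 0}(\varphi(u_i)\mathbf{z}^{u_i})^n$. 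The rearrangement of the unordered sum over $\alpha\in\mathcal{C}\cap\Z^d$ into an iterated sum over $(\pi,n_1,\dots,n_d)$ is then valid because, for each fixed multidegree in $\mathbf{z}$, only finitely many $(\pi,n_1,\dots,n_d)$ contribute, so everything reduces to a finite identity in $R[G]$ coefficient by coefficient.

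I expect the hardest step to be nothing more than bookkeeping: verifying the half-open tile decomposition matches exactly the $(0,1]$ versus $[0,1)$ conventions chosen in the definition of $\Pi$ and in the definition of $\mathcal{C}$. Once that combinatorial bijection between $\mathcal{C}\cap\Z^d$ and $(\Pi\cap\Z^d)\times\Z_{\geq 0}^d$ is in place, the algebraic manipulation is formally identical to the classical case and the extension to group-ring coefficients is automatic because $\varphi$ is a homomorphism.
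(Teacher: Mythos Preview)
Your proposal is correct and follows essentially the same route as the paper: decompose $\mathcal{C}\cap\Z^d$ via the tiling $\mathcal{C}=\bigsqcup_{c\in\N^d}(\Pi+\sum c_i u_i)$, use that $\varphi$ is a homomorphism to factor $\varphi(\alpha)\mathbf{z}^\alpha$, and sum the resulting geometric series. If anything, you are slightly more careful than the paper about justifying the rearrangement and the invertibility of $1-\varphi(u_i)\mathbf{z}^{u_i}$ in $R[G][[\mathbf{z}]]$.
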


The equation is interpreted as the equality \[
\prod_{i=1}^{d}(1-\varphi(u_i)\mathbf{z}^{u_i})\, \cdot \,\sigma(\mathcal{C}, \varphi; \mathbf{z}) = \sum_{\alpha \in \Pi\cap \Z^d} \varphi(\alpha) \mathbf{z}^\alpha\]
in the $R[G][\mathbf{z}^{\pm}]$-module $R[G][[z]]$. 

It is known that every rational polyhedral cone can be decomposed as a (disjoint) union of finitely many half-open rational simplicial cones (see, for example, \cite{StanleyDecompositions}, where a shelling is used to say which faces are close or open). It follows that
the $\varphi$-integer point transform of any pointed rational polyhedral cone is a rational function in $R[G](\mathbf{z})$.  In other words, we can extend the discussion from~\cite[Section 11.2]{beckrobins} to the $R[G]$ setting. There is a unique map from the submodule of $R[G][[\mathbf{z}]]$ generated by integer point transforms of rational polyhedral cones to the rational functions $R[G](\mathbf{z})$ (by this we refer to the localization of $R[G][z]$ by elements of the form $1-\phi(u)\mathbf{z}^{u}$), 
which is an $R[G][\mathbf{z}^{\pm}]$-module homomorphism. Moreover, since $1-\phi(u_i)\mathbf{z}^{u_i}$ is not a zero divisor in $R[G](\mathbf z)$, we get that the non-pointed cones are mapped to 0 by this homomorphism, see \cite[Lemma 11.2]{beckrobins}. 

Theorem~\ref{lem:brion_group} can be extended to cones whose apex is an arbitrary rational (or real) point in $\R^d$ as we outline in the following theorem.

\begin{thm}[Rationality for Rational Cone]\label{thm:apex}
Let $\mathcal{C}_a$ be a half-open rational simplicial cone generated by $u_1,u_2,\ldots,u_d$ and with apex $a \in \R^d$. Then we have 
\[
\sigma(\mathcal{C}_a,\phi;\mathbf{z}) =   \frac{\sum_{\alpha \in \Pi_a \cap\Z^{d}} \varphi(\alpha) \mathbf{z}^\alpha}{\prod_{i=1}^{d}(1-\varphi(u_i)\mathbf{z}^{u_i})}.
\]
where $\Pi_a$ is the translation by $a$ of the fundamental parallelepiped $\Pi$ of the cone with apex at the origin generated by $\{u_1,\ldots,u_d\}$.
\end{thm}

As a corollary of Theorem~\ref{lem:brion_group} we can obtain the following rationality result for $\phi$-Ehrhart series. It can be obtained by applying homomorphism $\mathbf z^\alpha \mapsto t^{\alpha_{d+1}}$ in Theorem~\ref{lem:brion_group}.

\begin{cor}
\label{cor:ehrhart_group}
For a half-open simplex $\Delta$, taking $\mathcal{C}$ to be the cone generated by $\Delta \times \{1\}$ in one higher dimension, the $\varphi$-Ehrhart series of $\Delta$ is a rational function in $t$ with coefficients in $R[G]$:
\begin{equation}\label{eq:phi-ehrhart}
   \mathcal{E}(\Delta,\phi;t)=\sum_{n\geq 0} \ehr(\Delta,\varphi;n)t^n = \frac{\sum_{(\beta,m) \in \Pi\cap \Z^{d+1}} \varphi(\beta) t^m}{\prod_{i=1}^{d+1}(1-\varphi(v_i)t)}.
\end{equation}
\end{cor}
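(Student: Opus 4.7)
The plan is to deduce the corollary from Theorem~\ref{lem:brion_group} by the standard trick of lifting to the cone over $\Delta\times\{1\}$ and then specializing the multivariate integer point transform to the single variable $t$, as indicated by the hint. Concretely, I would extend $\varphi\colon\Z^d\to G$ to $\widetilde\varphi\colon\Z^{d+1}\to G$ by declaring $\widetilde\varphi(\beta,m):=\varphi(\beta)$, i.e., sending the $(d+1)$-st standard basis vector to $1_G$. Let $v_1,\dots,v_{d+1}$ denote the vertices of $\Delta$ and set $u_i:=(v_i,1)\in\Z^{d+1}$; then $\mathcal{C}$ is the half-open rational simplicial cone generated by $u_1,\ldots,u_{d+1}$ with the openness pattern on the facets inherited from the half-open faces of $\Delta$, and $\widetilde\varphi(u_i)=\varphi(v_i)$. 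Applying Theorem~\ref{lem:brion_group} to $\mathcal{C}$ and $\widetilde\varphi$ in ambient dimension $d+1$ yields
\[
\sigma(\mathcal{C},\widetilde\varphi;\mathbf{z}) \;=\; \frac{\sum_{\alpha\in\Pi\cap\Z^{d+1}} \widetilde\varphi(\alpha)\,\mathbf{z}^\alpha}{\prod_{i=1}^{d+1}\bigl(1-\varphi(v_i)\,\mathbf{z}^{u_i}\bigr)}.
\]

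Next, I would apply the specialization $\mathbf{z}^\alpha\mapsto t^{\alpha_{d+1}}$ (that is, $z_1=\cdots=z_d=1$, $z_{d+1}=t$) to both sides of this identity. The denominator becomes $\prod_{i=1}^{d+1}(1-\varphi(v_i)t)$ and the numerator becomes $\sum_{(\beta,m)\in\Pi\cap\Z^{d+1}}\varphi(\beta)\,t^m$, matching the right-hand side of \eqref{eq:phi-ehrhart}. On the left, I would slice $\mathcal{C}\cap\Z^{d+1}$ by the height $\alpha_{d+1}=n$: the slice at height $n$ is $(n\Delta)\times\{n\}$ with the inherited half-open structure, so projection onto the first $d$ coordinates bijects it with $n\Delta\cap\Z^d$, and on these points $\widetilde\varphi$ agrees with $\varphi$. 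Grouping by $n$ therefore gives $\sum_{n\geq 0}\ehr(\Delta,\varphi;n)\,t^n=\mathcal{E}(\Delta,\varphi;t)$, which is the left-hand side of \eqref{eq:phi-ehrhart}.

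The only genuinely delicate point is to justify that $\mathbf{z}^\alpha\mapsto t^{\alpha_{d+1}}$ is a well-defined $R[G]$-algebra homomorphism on the ambient module so that specializing the Theorem~\ref{lem:brion_group} identity makes sense. On the power-series side this is immediate because each fiber $\{\alpha\in\mathcal{C}\cap\Z^{d+1}:\alpha_{d+1}=n\}$ is finite, so the specialized series converges in $R[G][[t]]$. On the rational-function side it suffices that each factor $1-\varphi(v_i)t$ is a non-zero-divisor in $R[G][[t]]$, which holds by exactly the argument recorded in the paragraph following Theorem~\ref{lem:brion_group}. Once this well-definedness is established, the identity follows by directly comparing the two specialized expressions, proving the corollary.
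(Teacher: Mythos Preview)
Your proposal is correct and matches the paper's approach exactly: the paper derives the corollary in one line by applying the homomorphism $\mathbf{z}^\alpha\mapsto t^{\alpha_{d+1}}$ to Theorem~\ref{lem:brion_group}, and your argument is a fleshed-out version of precisely that specialization. The extra care you take with the extension $\widetilde\varphi$ and with well-definedness of the substitution is reasonable elaboration, not a departure from the paper's method.
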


We call the numerator \[h^*(\Delta,\phi;t) := \sum_{(\beta,m) \in \Pi\cap \Z^{d+1}} \varphi(\beta) t^m\] the {\it $h^*_\phi$-polynomial} of $\Delta$. 

\begin{exam}
    Recall from Example~\ref{ex:first example} that when $P = [0,1]^2$, $R = \Z_2$, $G = C_3$, and $\varphi(e_1) = \varphi(e_2) = \tau$, we obtain
    \[
        \mathcal{E}(P,\varphi;t) = \frac{1+\tau t}{(1+t)(1+\tau t)(1+\tau^2t)} = \frac{1}{(1+t)(1+\tau^2t)}.
    \]
    We can use \eqref{eq:phi-ehrhart} to explain why the cancellation occurs and obtain the simplified rational function more directly.

    We decompose the unit square into three half-open simplices: the closed diagonal $S_1=\conv((0,0),(1,1))$, and two half-open triangles $S_2$ and $S_3$.
    
    Because of our definition of $\varphi$, there is a symmetry of $\varphi$ across the line $x=y$.
    Since $R = \Z_2$, we get
    \[
        \mathcal{E}(P,\varphi;t)= \mathcal{E}(S_1,\varphi;t) + \mathcal{E}(S_2,\varphi;t) + \mathcal{E}(S_2,\varphi;t) = \mathcal{E}(S_1,\varphi;t) + 2\mathcal{E}(S_2,\varphi;t) = \mathcal{E}(S_1,\varphi;t).
    \]
    Evaluating $\mathcal{E}(S_1,\varphi;t)$ immediately gives $1/((1+t)(1+\tau^2t))$.
\end{exam}

A classical reciprocity result in Ehrhart theory can be recovered from our framework: Ehrhart--Macdonald reciprocity states that if $P$ is a $d$-dimensional lattice polytope and $P^{\circ}$ is the topological interior of $P$, then $\mathcal{E}(P;t^{-1}) = (-1)^{d+1}\mathcal{E}(P^{\circ};t)$.
To adapt this result, let $\phi: \Z^d \to G$ be a homomorphism. By $\phi': \Z^d \to G$ we denote the homomorphism defined as $\phi'(\alpha)=(\phi(\alpha))^{-1}$ for all $\alpha\in\Z^d$. Also, for a half-open simplex $\Delta$, let $\Delta'$ denote the half-open simplex with the same vertices but with complementary facets.

The following reciprocity theorem holds for $h_{\varphi}^*$-polynomials; see also \cite[Thm. 5.8]{schilling2025}.

\begin{thm}[Reciprocity]\label{thm:phi-reciprocity}
For every homomorphism $\phi:\Z^d\to G$ and every half-open simplex $\Delta\subset \Z^d$ with vertices $v_1,\ldots,v_{d+1}$,
$$h^*(\Delta',\phi;t)=\phi(v_1+\ldots+v_{d+1})t^{d+1}h^*(\Delta,\phi';1/t).$$

More strongly,
for every simplex $\Delta$ and homomorphism $\phi$,
$$\mathcal E(\Delta,\phi';t^{-1})=(-1)^{d+1}\mathcal E(\Delta',\phi;t).$$
\end{thm}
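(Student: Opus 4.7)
The plan is to prove the first (polynomial-level) identity via an explicit involution on the lattice points of the fundamental parallelepiped, and then derive the second (series-level) identity mechanically from the first using Corollary~\ref{cor:ehrhart_group}.

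For the first identity, set $u_i=(v_i,1)\in\Z^{d+1}$ and consider the involution $\tau(\alpha)=(u_1+\cdots+u_{d+1})-\alpha$ on $\R^{d+1}$. Writing $\alpha=\sum_{i=1}^{d+1}\lambda_i u_i$, one has $\tau(\alpha)=\sum_{i=1}^{d+1}(1-\lambda_i)u_i$; since $\lambda\mapsto 1-\lambda$ swaps $(0,1]$ with $[0,1)$, the involution $\tau$ sends $\Pi\cap\Z^{d+1}$ bijectively onto $\Pi'\cap\Z^{d+1}$, where $\Pi'$ is the fundamental parallelepiped of the cone over $\Delta'\times\{1\}$. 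This is the only place the complementarity of $\Delta$ and $\Delta'$ enters. For a pair $(\beta,m)\in\Pi\cap\Z^{d+1}$ corresponding to $(\beta',m')\in\Pi'\cap\Z^{d+1}$, one has $\beta'=v_1+\cdots+v_{d+1}-\beta$ and $m'=(d+1)-m$, so $\phi(\beta')=\phi(v_1+\cdots+v_{d+1})\,\phi'(\beta)$. Summing over the parallelepiped and reindexing via $\tau$ produces the first identity directly.

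For the second identity, I apply Corollary~\ref{cor:ehrhart_group} to $(\Delta,\phi')$ and substitute $t\mapsto t^{-1}$:
$$\mathcal{E}(\Delta,\phi';t^{-1})=\frac{h^*(\Delta,\phi';t^{-1})}{\prod_{i=1}^{d+1}(1-\phi(v_i)^{-1}t^{-1})}.$$
The factorization $1-\phi(v_i)^{-1}t^{-1}=-\phi(v_i)^{-1}t^{-1}(1-\phi(v_i)t)$ collects a sign $(-1)^{d+1}$ together with a factor $\phi(v_1+\cdots+v_{d+1})^{-1}t^{-(d+1)}$ in the denominator. Multiplying numerator and denominator by $\phi(v_1+\cdots+v_{d+1})t^{d+1}$ converts the numerator, via the first identity, into $h^*(\Delta',\phi;t)$, and the denominator into $(-1)^{d+1}\prod_{i=1}^{d+1}(1-\phi(v_i)t)$. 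A final invocation of Corollary~\ref{cor:ehrhart_group} identifies the result as $(-1)^{d+1}\mathcal{E}(\Delta',\phi;t)$.

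The main obstacle is really only bookkeeping: verifying that the involution $\tau$ respects the half-open structure, so that the $(0,1]$ coordinates of $\Pi$ correspond precisely to the $[0,1)$ coordinates of $\Pi'$, and keeping track of which $\phi$ values sit where. Once this geometric check is in hand, the remainder is formal manipulation inside the localized group ring $R[G](t)$, closely mirroring the classical Ehrhart--Macdonald argument with the homomorphism $\phi$ and its inverse $\phi'$ carried along.
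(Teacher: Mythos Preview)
Your proposal is correct and follows essentially the same approach as the paper: both rely on the reflection $\alpha\mapsto(u_1+\cdots+u_{d+1})-\alpha$ identifying $\Pi\cap\Z^{d+1}$ with $\Pi'\cap\Z^{d+1}$, together with the same denominator manipulation $1-\phi(v_i)^{-1}t^{-1}=-\phi(v_i)^{-1}t^{-1}(1-\phi(v_i)t)$. The only difference is organizational---you establish the $h^*$-identity first via the explicit involution and then derive the series identity, whereas the paper manipulates $\mathcal{E}(\Delta,\phi';t^{-1})$ first and then invokes the reflection to recognize the resulting numerator as $h^*(\Delta',\phi;t)$.
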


Lastly, from the existence of Brion's identity on multivariate integral transforms of polytopes one can recover the following $\phi$-Brion version.
To each vertex $v$ of a polytope $P$, we attach a {\textit{vertex cone}}     \[K_{v,P} = \{v + \lambda(u-v) \mid u \in P, \lambda \in \R_{\geq 0}\}.\]
From \Cref{lem:brion_group}, the $\phi$-integral transform of each vertex cone is a multivariate rational function in variables $z_1,\dots, z_d$ with coefficients in $R[G]$.

\begin{thm}[$\varphi$-Brion]\label{thm:phi-Brion}
For a rational polytope $P$ we have the following identity for weighted integer point transforms as  rational functions:
\[
\sigma(P,\varphi,\mathbf{z}) =  \sum_{v \text{ vertex of }P} \sigma(K_{v,P}, \varphi, \mathbf{z}).
\]
\end{thm}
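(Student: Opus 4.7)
My plan is to derive the $\varphi$-Brion identity from the classical Brion identity via a substitution that twists each lattice monomial by its $\varphi$-weight. The crucial observation is that the assignment $\pi\colon\mathbf{z}^{\alpha}\mapsto \varphi(\alpha)\mathbf{z}^{\alpha}$ is multiplicative on monomials precisely because $\varphi\colon\Z^d\to G$ is a group homomorphism: one has $\varphi(\alpha+\beta)\mathbf{z}^{\alpha+\beta}=\varphi(\alpha)\mathbf{z}^{\alpha}\cdot\varphi(\beta)\mathbf{z}^{\beta}$. Consequently $\pi$ extends $R[G]$-linearly to a ring endomorphism of $R[G][\mathbf{z}^{\pm}]$, and I would check that it descends to the localization inverting the edge-direction binomials $\{1-\mathbf{z}^{u}\}$ that occur at vertices of $P$: each factor $1-\mathbf{z}^{u_i}$ maps to $1-\varphi(u_i)\mathbf{z}^{u_i}$, which by the remark following Theorem~\ref{lem:brion_group} is not a zero divisor in $R[G](\mathbf{z})$, so $\pi$ induces a well-defined homomorphism between the relevant rational function rings.

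With $\pi$ in hand, I would apply it termwise to the classical Brion identity $\sigma(P,\mathbf{z})=\sum_{v}\sigma(K_{v,P},\mathbf{z})$. On the left, $\sigma(P,\mathbf{z})$ is a genuine Laurent polynomial because $P$ is bounded, and directly $\pi(\sigma(P,\mathbf{z}))=\sum_{\alpha\in P\cap\Z^d}\varphi(\alpha)\mathbf{z}^{\alpha}=\sigma(P,\varphi,\mathbf{z})$. On the right, each vertex cone decomposes as a disjoint union of finitely many half-open rational simplicial cones, so every term $\sigma(K_{v,P},\mathbf{z})$ is a finite sum of rational expressions of the form $\big(\sum_{\alpha\in\Pi\cap\Z^d}\mathbf{z}^{\alpha}\big)\big/\prod_i(1-\mathbf{z}^{u_i})$ coming from the unweighted case of Theorem~\ref{lem:brion_group}; $\pi$ sends each such expression to its $\varphi$-weighted analogue $\big(\sum_{\alpha\in\Pi\cap\Z^d}\varphi(\alpha)\mathbf{z}^{\alpha}\big)\big/\prod_i(1-\varphi(u_i)\mathbf{z}^{u_i})$, which is exactly $\sigma(K_{v,P},\varphi,\mathbf{z})$ by Theorem~\ref{lem:brion_group} itself. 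Summing over vertices yields the claim.

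The main obstacle I anticipate is the careful bookkeeping of which ambient ring each equality inhabits, since classical Brion is usually stated over a field such as $\Q(\mathbf{z})$ while the $\varphi$-version lives in the more subtle localization of $R[G][\mathbf{z}^{\pm}]$; but once $\pi$ is verified to commute with that localization, the argument is essentially formal. An alternative route would mimic the classical proof directly, applying a Lawrence--Varchenko or Brianchon--Gram decomposition of $P$ into vertex cones and observing that the $\varphi$-integer point transform is a valuation whose values on cones with lines vanish in $R[G](\mathbf{z})$; this works too, but the substitution approach is cleaner because it delegates all of the geometric content to the already-established unweighted identity.
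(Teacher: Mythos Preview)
Your proposal is correct and is essentially the paper's own argument: the paper applies the ring homomorphism $\mathbb{Z}(\mathbf{z})\to R[G](\mathbf{z})$ given by $z_i\mapsto\varphi(e_i)z_i$ to the classical Brion identity, which is exactly your twist $\pi\colon\mathbf{z}^{\alpha}\mapsto\varphi(\alpha)\mathbf{z}^{\alpha}$. Your additional care about the localization and the explicit use of Theorem~\ref{lem:brion_group} on each vertex cone simply unpacks what the paper states in one line.
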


The proofs of the four theorems above are presented in Section \ref{sec:proofsofmainthms}.  

Beyond rationality and reciprocity, our framework yields new structural information on the distribution of lattice points among congruence classes of finite Abelian quotients. Suppose $\varphi:\mathbb Z^d \twoheadrightarrow G$ is a surjective homomorphism onto a finite Abelian group. Then the coefficients of the group-ring valued Ehrhart function naturally record the number of lattice points in each fiber of $\varphi$. It is interesting to look at the coefficients in front of each element of $G$. In Section~\ref{sec:classical} we prove that for every $g\in G$, the coefficient of $g$ in the $\varphi$-Ehrhart function is a quasi-polynomial in $n$ with leading term $\tfrac{\vol(P)}{|G|}$,
so that each congruence class receives asymptotically the same proportion of lattice points, and we connect to the classical leading coefficient of Ehrhart polynomials. We further show that every coefficient function is a quasipolynomial whose period is bounded by the exponent of $G$. These results provide a new interpretation of Ehrhart quasipolynomiality through finite quotient lattices and connect lattice-point enumeration with the algebraic structure of finite Abelian groups.

The paper presents several other consequences and applications of our framework. We can recover several known results by making particular choices 
of $G$ and $R$ in Sections~\ref{sec:proofsofmainthms}, ~\ref{sec:classical} and~\ref{sec:recoverweighted}. In this paper we use the group $G$ to label lattice points inside a polytope $P$, while in the equivariant Ehrhart theory \cite{StapledonEquivariant} there is a group $H$ acting on the lattice points of $P$. In the final Section \ref{sec:phiEquivariantEhrhart} we discuss how our group-ring formalism can be combined with the orbit structure under $H$, and the dilations of $P$.

\section{Proofs of Rationality, Reciprocity, and $\varphi$-Brion}
\label{sec:proofsofmainthms}

Our approach starts with the well-known multivariate version of Ehrhart series and Stanley's original approach to represent Ehrhart series for simplices through summation over cones \cite{StanleyDecompositions}. Many of our results are based on the fact that
the relations we know and love (e.g., reciprocity, Brion, etc) survive under the right maps.

Let $\Delta=\conv(v_1,\ldots,v_{d+1})$ be a $d$-dimensional half-open simplex with vertices in $\Q^d$ with first $k$ facets removed. That is, $\Delta$ is defined by
\begin{equation}
\label{eqn:Delta}
\Delta:=\left\{\sum_{i=1}^{d+1} \lambda_i v_i \mid \sum_{i=1}^{d+1} \lambda_i =1, \,\lambda_1,\ldots,\lambda_k>0, \,\lambda_{k+1},\ldots,\lambda_{d+1}\geq 0\right\}.
\end{equation}
Since every polytope can be written as a disjoint union of half-open full-dimensional simplices, we restrict our attention primarily to $\Delta$. As in the introduction, we note that we allow strict inequalities $\lambda_i>0$ for any subset of indices $i\in I, I\subset \{1, \ldots,d+1\}$ and the setting of \eqref{eqn:Delta} can be achieved by reindexing.

Let $\mathcal C\subset \R^{d+1}$ be the half-open cone over $\Delta$, i.e., the cone generated by vectors $u_i=(v_i,1)$ with the first $k$ facets removed. Also, let $\Pi$ be the fundamental parallelepiped of this cone. With these facts and notation we will now prove  \Cref{lem:brion_group}; note the minor change in dimension.

\begin{proof}[Proof of \Cref{lem:brion_group} (Rationality)] 

We will rewrite the $\phi$-integer point transform of $\mathcal C$ 
$$\sigma(\mathcal C,\phi, \mathbf z)=\sum_{\alpha\in \mathcal C\cap \Z^{d+1}}\phi(\alpha)\mathbf z^{\alpha}$$
as sum of different classes of lattice points with respect to translations by vectors $u_i$.

We can represent $\mathcal C$ as the disjoint union of translated copies of $\Pi$ as follows: 
$$\mathcal C=\bigsqcup_{c\in \N^{d+1}}\left(\Pi+\sum_{i=1}^{d+1} c_iu_i\right),$$ where $c = (c_1,\dots,c_{d+1})$. For every $\alpha\in \mathcal{C}\cap \Z^{d+1}$, let $\beta$ be the corresponding representative in~$\Pi$. Then 
$\alpha=\beta+\sum_{i=1}^{d+1} c_i u_i$
for a certain choice of nonnegative integers $c_i$, and all options for $c_i$ will be present among the points in $\mathcal C\cap \Z^{d+1}$. Using this in the formula for the $\phi$-integer point transform, we get
\begin{equation}\label{eq:sum}\begin{aligned}
    \sum_{\alpha\in \mathcal C\cap \Z^{d+1}}\phi(\alpha)\mathbf z^\alpha &= \sum_{\beta\in \Pi\cap \Z^{d+1}}\left(\sum_{c\in \N^{d+1}}\phi\left(\beta + \sum_{i=1}^{d+1} c_iu_i\right)\mathbf z^{\beta+\sum c_iu_i}\right) \\
    &= \left(\sum_{\beta\in \Pi\cap \Z^{d+1}}\phi(\beta)\mathbf z^\beta\right) \sum_{c\in\N^{d+1}}\left(\phi\left(\sum_{i=1}^{d+1} c_i u_i\right)\mathbf z^{\sum c_iu_i}\right).
\end{aligned}
\end{equation}

Using that $\phi$ is a homomorphism we transform
$$\begin{aligned}
    &\sum_{c \in \N^{d+1}}\left(
    \phi \left(\sum_{i=1}^{d+1} c_i u_i\right)\mathbf z^{\sum c_iu_i}\right)
    = \sum_{c \in \N^{d+1}}\left(\prod_{i=1}^{d+1} (\phi(u_i))^{c_i}(\mathbf z^{u_i})^{c_i} \right)\\
    &=\prod_{i=1}^{d+1}(1+\phi(u_i)\mathbf z^{u_i}+(\phi(u_i))^2(\mathbf z^{u_i})^2+\ldots)=\frac{1}{\prod_{i=1}^{d+1}(1-\phi(u_i)\mathbf z^{u_i})}.
\end{aligned}$$

Combining with \eqref{eq:sum} we get the claimed equality.
\end{proof}

\begin{rem}
We note that there is an alternative way to get the same result directly from the classical integer point transform formula. Rewrite the classical formula for integer point transforms
as an equality of formal (Laurent) series
$$\left(\sum_{\alpha\in \mathcal C\cap \Z^{d+1}}\mathbf z^{\alpha}\right)
\left(\Pi(1-\mathbf z^{u_i})\right)=\sum_{\beta\in\Pi}\mathbf z^\beta$$ with integer 
coefficients. We apply two homomorphisms, the ring homomorphism from $\Z$ to $R$ that takes $1$ to the unity of $R$, and the group homomorphism $\mathbf z^\alpha \mapsto \phi(\alpha)\mathbf z^\alpha$ from the group $Z_{d+1}$ consisting of all powers of $\mathbf z$ (isomorphic to $\Z^{d+1}$) to the direct product $G\times Z_{d+1}$. The result then follows from the equality of formal (Laurent) series with coefficients in $R[G]$.
\end{rem}

For a proof of Theorem \ref{thm:apex}, we need to make a minor adjustment.

\begin{proof}[Proof of \Cref{thm:apex} (Rationality for Rational Cone)]
We note that copies of the parallelepiped $\Pi_a$ translated by natural linear combinations of the vectors $u_1,\ldots,u_d$ give a tiling of the cone $\mathcal C_a$. After that the computations from the proof of \Cref{lem:brion_group} can be repeated verbatim replacing $\Pi$ with $\Pi_a$.
\end{proof}

We continue with the reciprocity Theorem \ref{thm:phi-reciprocity}. Let $\phi: \Z^d \to G$ be a homomorphism. Let $\phi': \Z^d \to G$ be the homomorphism defined as $\phi'(\alpha)=(\phi(\alpha))^{-1}$ for $\alpha\in\Z^d$.  For a half-open simplex $\Delta$, let $\Delta'$ be the half-open simplex on the same vertices with complementary facets removed. In particular, if $\Delta$ is a closed simplex, then $\Delta'$ is its interior.
We establish the following reciprocity property for $h^*$-polynomials.

$$h^*(\Delta',\phi;t)=\phi(v_1+\ldots+v_{d+1})t^{d+1}h^*(\Delta,\phi';1/t).$$

\begin{proof}[Proof of \Cref{thm:phi-reciprocity} (Reciprocity)]

Let $v_1,\ldots,v_{d+1}$ be vertices of $\Delta$ and $\Delta'$. Corollary~\ref{cor:ehrhart_group} gives
$$\mathcal E(\Delta,\phi;t)=\frac{\sum_{(\beta,m)\in \Pi\cap\Z^{d+1}}\phi(\beta)t^m}{\prod_{i=1}^{d+1} (1-\phi(v_i)t)} \quad \text{and} \quad \mathcal E(\Delta,\phi';1/t)=\frac{\sum_{(\beta,m)\in \Pi\cap\Z^{d+1}}(\phi(\beta))^{-1}t^{-m}}{\prod_{i=1}^{d+1} (1-(\phi(v_i))^{-1}t^{-1})}.$$

Multiplying both numerator and denominator of the $\phi'$-Ehrhart series $\mathcal E(\Delta,\phi';1/t)$ by $t^{d+1}\phi(v_1+\ldots+v_{d+1})$ and bringing to the previous denominator yields
$$
\mathcal E(\Delta,\phi';1/t)=(-1)^{d+1}\frac{\sum_{(\beta,m)\in \Pi\cap \Z^{d+1}}\phi(v_1+\ldots+v_{d+1}-\beta)t^{d+1-m}}{\prod_{i=1}^{d+1} (1-\phi(v_i)t)}.$$

The fundamental parallelepipeds $\Pi$ and $\Pi'$ for $\Delta$ and $\Delta'$ are reflections of each other with respect to $\frac12(v_1+\ldots+v_{d+1},d+1)$. 
Thus, the last numerator is the $h^*_\phi$-polynomial of $\Delta'$ and the statement of the theorem follows.
\end{proof}

\begin{proof}[Proof of \Cref{thm:phi-Brion} ($\varphi$-Brion)]

The famous Brion's theorem or Brion's identity says (see e.g., \cite{barvinokzurichbook,beckrobins}) $$\sigma(P,1,\mathbf{z}) =  \sum \sigma(K_{v,P},1,\mathbf{z})$$ over $\mathbb{Z}(\mathbf{z})$, where the sum is over the vertices of $P$.  This is an identity of rational functions in the variables in $\mathbf{z}$. Now apply to the identity the ring homomorphism $\Z(\mathbf{z}) \to R[G](\mathbf{z})$ given by $z_i \to \phi(e_i)z_i$ for each $i$. For each $\alpha \in \Z^d$, this sends $\mathbf{z}^{\alpha}$ to $\phi(\alpha)\mathbf{z}^{\alpha}$. Since $\phi$ is a 
group homomorphism, this extends to Laurent monomials and Laurent polynomials. 

It is important to note that this is well-defined on the rational functions appearing in Brion's theorem. The rational functions coming from vertex cones 
are obtained, after decomposing pointed rational cones into half-open rational simplicial cones, from denominators of the form $1-\mathbf{z}^u$. Under the homomorphism $\mathbf{z}^{\alpha} \mapsto \phi(\alpha)\mathbf{z}^{\alpha}$, such a factor becomes $1-\phi(u)\mathbf{z}^u$. These are invertible in $R[G](\mathbf{z})$. 

Now apply the homomorphism on both sides of the classical Brion identity. On the left hand side, 
\[
\sigma(P,1,\mathbf{z}) = \sum_{\alpha \in P \cap \Z^d} \mathbf{z}^{\alpha} \mapsto  \sum_{\alpha \in P \cap \Z^d} \phi(\alpha) \mathbf{z}^{\alpha} = \sigma(P,\phi,\mathbf{z}).
\]
On the right-hand side, each ordinary vertex-cone transform $\sigma(K_{v,P},1,\mathbf{z})$ is sent to the corresponding weighted vertex-cone transform $\sigma(K_{v,P},\phi,\mathbf{z})$. The result then follows.
\end{proof}

\begin{exam}\label{exam:brion}
We verify Theorem~\ref{thm:phi-Brion} with Example~\ref{ex:first example}. Recall that $P=[0,1]^2$, $R=\Z$, $G=\langle \tau \ | \ \tau^3=1\rangle$, and $\phi:\Z^2 \to G$ is given by $\phi(i,j)=\tau^{i+j}$.
Figure~\ref{fig:brion example} illustrates the four cones $K_{v,P}$ for which we will compute $\sigma$.

\begin{figure}
    \centering
\begin{tikzpicture}[scale=0.65]
   \fill [gray!30] (0,0) rectangle (4,4);

   \draw[thin,opacity=0.2] (-0.5,-0.5) grid (4.5,4.5);

   \draw [thick,gray] (-0.5,0) -- (4.5,0);
   \draw [thick,gray] (0,-0.5) -- (0,4.5);

   \draw [very thick, ->] (0,0) -- (4.6,0);
   \draw [very thick,->] (0,0) -- (0,4.6);
   \draw [thick,fill=gray!50] (0,0) rectangle (1,1);

 \foreach \i in {0.5,1.5,2.5,3.5 } {
           \node at (\i,4.5) {\scalebox{0.5}{\color{gray} $\vdots$}};
    }
 \foreach \i in {0.5,1.5,2.5,3.5 } {
           \node at (4.5,\i) {\scalebox{0.5}{\color{gray} $\cdots$}};
    }

\node at (4.5, 4.5) {\scalebox{0.5}{\color{gray} $\Ddots$}};
\node at (-0.3, -0.3) {\phantom{\scalebox{0.5}{\color{gray} $\ddots$}}};

   \foreach \i in {(0.16,0.2), (3.16, 0.2), (0.16, 3.2), (3.16, 3.2), (2.16, 1.2), (1.16, 2.2), (4.16, 2.2),(2.16, 4.2)} {
           \node at \i {\tiny{$1$}};
    }

   \foreach \i in {(0.2,1.19),(3.2, 1.19),(0.2, 4.18),(3.2, 4.18),(2.2, 2.18),(1.2, 3.18),(4.2, 3.18),(1.2, 0.18),(4.2,0.18)} {
        \node at \i {\tiny{$\tau$}};
    }

   \foreach \i in {(0.32, 2.25),(3.32, 2.25),(1.32, 1.25),(4.32, 1.25),(2.32, 3.25),(1.32, 4.25),(4.32, 4.25),(2.32, 0.25)} {
        \node at \i {\tiny{$\tau^2$}};
     }

\end{tikzpicture}\hfill
\begin{tikzpicture}[scale=0.65]
   \fill [gray!30] (0,0) rectangle (4,4);

   \draw[thin,opacity=0.2] (-0.5,-0.5) grid (4.5,4.5);

   \draw [thick,gray] (-0.5,0) -- (4.5,0);
   \draw [thick,gray] (3,-0.5) -- (3,4.5);

   \draw [very thick,<-] (-0.6,0) -- (4,0);
   \draw [very thick,->] (4,0) -- (4,4.6);
   \draw [thick,fill=gray!50] (3,0) rectangle (4,1);

 \foreach \i in {0.5,1.5,2.5,3.5 } {
           \node at (\i,4.5) {\scalebox{0.5}{\color{gray} $\vdots$}};
    }
 \foreach \i in {0.5,1.5,2.5,3.5 } {
           \node at (-0.3,\i) {\scalebox{0.5}{\color{gray} $\cdots$}};
    }

\node at (-0.3, 4.5) {\scalebox{0.5}{\color{gray} $\ddots$}};
\node at (-0.3, -0.3) {\phantom{\scalebox{0.5}{\color{gray} $\ddots$}}};

   \foreach \i in {(0.16,0.2), (3.16, 0.2), (0.16, 3.2), (3.16, 3.2), (2.16, 1.2), (1.16, 2.2), (4.16, 2.2),(2.16, 4.2)} {
           \node at \i {\tiny{$1$}};
    }

   \foreach \i in {(0.2,1.19),(3.2, 1.19),(0.2, 4.18),(3.2, 4.18),(2.2, 2.18),(1.2, 3.18),(4.2, 3.18),(1.2, 0.18),(4.2,0.18)} {
        \node at \i {\tiny{$\tau$}};
    }

   \foreach \i in {(0.32, 2.25),(3.32, 2.25),(1.32, 1.25),(4.32, 1.25),(2.32, 3.25),(1.32, 4.25),(4.32, 4.25),(2.32, 0.25)} {
        \node at \i {\tiny{$\tau^2$}};
}

\end{tikzpicture}\hfill
\begin{tikzpicture}[scale=0.65]
   \fill [gray!30] (0,0) rectangle (4,4);

   \draw[thin,opacity=0.2] (-0.5,-0.5) grid (4.5,4.5);

   \draw [thick,gray] (-0.5,3) -- (4.5,3);
   \draw [thick,gray] (0,-0.5) -- (0,4.5);

   \draw [very thick, ->] (0,4) -- (4.6,4);
   \draw [very thick,->] (0,4) -- (0,-0.6);
   \draw [thick,fill=gray!50] (0,3) rectangle (1,4);

 \foreach \i in {0.5,1.5,2.5,3.5 } {
           \node at (\i,-0.3) {\scalebox{0.5}{\color{gray} $\vdots$}};
    }
 \foreach \i in {0.5,1.5,2.5,3.5 } {
           \node at (4.5,\i) {\scalebox{0.5}{\color{gray} $\cdots$}};
    }

\node at (4.5, -0.3) {\scalebox{0.5}{\color{gray} $\ddots$}};

   \foreach \i in {(0.16,0.2), (3.16, 0.2), (0.16, 3.2), (3.16, 3.2), (2.16, 1.2), (1.16, 2.2), (4.16, 2.2),(2.16, 4.2)} {
           \node at \i {\tiny{$1$}};
    }

   \foreach \i in {(0.2,1.19),(3.2, 1.19),(0.2, 4.18),(3.2, 4.18),(2.2, 2.18),(1.2, 3.18),(4.2, 3.18),(1.2, 0.18),(4.2,0.18)} {
        \node at \i {\tiny{$\tau$}};
    }

   \foreach \i in {(0.32, 2.25),(3.32, 2.25),(1.32, 1.25),(4.32, 1.25),(2.32, 3.25),(1.32, 4.25),(4.32, 4.25),(2.32, 0.25)} {
        \node at \i {\tiny{$\tau^2$}};
}

\end{tikzpicture}\hfill
\begin{tikzpicture}[scale=0.65]
   \fill [gray!30] (0,0) rectangle (4,4);

   \draw[thin,opacity=0.2] (-0.5,-0.5) grid (4.5,4.5);

   \draw [thick,gray] (-0.5,3) -- (4.5,3);
   \draw [thick,gray] (3,-0.5) -- (3,4.5);

   \draw [very thick,->] (4,4) -- (-0.6,4);
   \draw [very thick,->] (4,4) -- (4,-0.6);
   \draw [thick,fill=gray!50] (3,3) rectangle (4,4);

 \foreach \i in {0.5,1.5,2.5,3.5 } {
           \node at (\i,-0.3) {\scalebox{0.5}{\color{gray} $\vdots$}};
    }
 \foreach \i in {0.5,1.5,2.5,3.5 } {
           \node at (-0.3,\i) {\scalebox{0.5}{\color{gray} $\cdots$}};
    }

\node at (-0.3, -0.3) {\scalebox{0.5}{\color{gray} $\Ddots$}};

   \foreach \i in {(0.16,0.2), (3.16, 0.2), (0.16, 3.2), (3.16, 3.2), (2.16, 1.2), (1.16, 2.2), (4.16, 2.2),(2.16, 4.2)} {
           \node at \i {\tiny{$1$}};
    }

   \foreach \i in {(0.2,1.19),(3.2, 1.19),(0.2, 4.18),(3.2, 4.18),(2.2, 2.18),(1.2, 3.18),(4.2, 3.18),(1.2, 0.18),(4.2,0.18)} {
        \node at \i {\tiny{$\tau$}};
    }

   \foreach \i in {(0.32, 2.25),(3.32, 2.25),(1.32, 1.25),(4.32, 1.25),(2.32, 3.25),(1.32, 4.25),(4.32, 4.25),(2.32, 0.25)} {
        \node at \i {\tiny{$\tau^2$}};
}
\end{tikzpicture}

\caption{The four cones $K_{v,P}$ generated at the vertices $v$ of $P = [0,1]^2$.
	The square $[0,1]^2$ is outlined and each lattice point $(i,j)$ is labeled by $\phi(i,j) = \tau^{i+j}$.}\label{fig:brion example}

\end{figure}

The lattice points of $P$ are $(0,0),(1,0),(0,1),(1,1)$ so we directly have
\[
\sigma(P,\phi,\mathbf{z})=1+\tau z_1 + \tau z_2 + \tau^2 z_1z_2.
\]
Now compute $\sigma(K_{v,P},1,\mathbf{z})$ for the four vertices of $P$. The cone generated at $(0,0)$ is all points in the nonnegative quadrant so by \Cref{thm:apex} we have 
\[
\sigma(K_{(0,0),P},\phi,\mathbf{z})=\frac{1}{(1-\tau z_1)(1-\tau z_2)}.
\]
The cone at $(1,0)$ is generated by $-e_1$ and $e_2$ so again by \Cref{thm:apex}, 
\[
\sigma(K_{(1,0),P},\phi,\mathbf{z}) = \frac{\tau z_1}{(1-\tau^2 z_1^{-1})(1-\tau z_2)},
\]
as this translates the cone generated by $-e_1$ and $e_2$ at the origin by $(1,0)$. 
Similar computations reveal that
\[
\sigma(K_{(0,1),P},\phi,\mathbf{z}) = \frac{\tau z_2}{(1-\tau z_1)(1-\tau^2 z_2^{-1})}, \qquad \sigma(K_{(1,1),P},\phi,\mathbf{z}) = \frac{\tau^2 z_1z_2}{(1-\tau^2 z_1^{-1})(1-\tau^2z_2^{-1})}.
\]
Therefore, the right-hand side of Theorem~\ref{thm:phi-Brion} is
\[
\frac{1}{(1-\tau z_1)(1-\tau z_2)} + \frac{\tau z_1}{(1-\tau^2 z_1^{-1})(1-\tau z_2)} + \frac{\tau z_2}{(1-\tau z_1)(1-\tau^2 z_2^{-1})} + \frac{\tau^2 z_1z_2}{(1-\tau^2 z_1^{-1})(1-\tau^2z_2^{-1})}.
\]

To reconcile this with the left-hand side of Theorem~\ref{thm:phi-Brion}, set $x=\tau z_1$ and $y=\tau z_2$. Then $x^{-1}=\tau^2z_1^{-1}$ and $y^{-1}=\tau^2z_2^{-1}$. The right-hand side of Theorem~\ref{thm:phi-Brion} is then
\[
\frac{1}{(1-x)(1-y)} + \frac{x}{(1-x^{-1})(1-y)} + \frac{y}{(1-x)(1-y^{-1})} + \frac{xy}{(1-x^{-1})(1-y^{-1})}.
\]
This is exactly the classical Brion identity for the ordinary integer point transform for $P$ in $x,y$ variables, so it simplifies to 
\[
1+x+y+xy=1+\tau z_1 + \tau z_2 + \tau^2 z_1 z_2 = \sigma(P,\phi,\mathbf{z}),
\]
as desired. Additionally, our setting for the variables $x$ and $y$ is related to the homomorphism we used in the proof of Theorem~\ref{thm:phi-Brion}.
\end{exam}

We note that a similar homomorphism-based approach was recently used by Beck and Kunze~\cite{beck2025} to generalize results of Chapoton \cite{chapoton_2016}; see also Section~\ref{sec:recoverweighted} below.

\section{Group ring coefficients, quasipolynomiality, and volumes} 
\label{sec:classical}

 In this section, we study the coefficients of separate group elements in the $\ehr(P,\phi,n)$ and generalize the classical Ehrhart quasipolynomiality and connection to the volume of $P$. As a consequence, we recover the classical results on Ehrhart quasipolynomials for integer and rational polytopes from our $\varphi$-Ehrhart theory. 

In the remainder of the section we will assume that the group $G$ is finite and that the homomorphism $\varphi: \Z^n \rightarrow G$ is surjective. More concretely, we assume $R=\Z$. 
This is for convenience because one could work over any ring $R$ of characteristic zero and arrive at similar results (this hinges on the fact that $\Z$ has an injection into $R$ in this case). 

Recall that the classical Ehrhart function is a (quasi)polynomial with rational coefficients. We would like to give a similar interpretation to the $\varphi$-Ehrhart function $\ehr(P,\varphi;n)$ as a function of $n$. We fix a group element $g\in G$ and consider the coefficient of $g$ in $\ehr(P,\varphi;n)$ as a function of $n$, which we denote by $[g]\ehr(P,\varphi;n)$.  We will show that it is quasipolynomial in $n$ and that the leading coefficient is equal to $\frac{\vol(P)}{|G|}$ for all $g\in G$, where $\vol(P)$ be the normalized volume of $P$ so that the fundamental domain for $\Z^d$ has volume one. We will also derive a statement on a classical (unweighted) lattice point enumeration problem.

Recall that the exponent $\exp(G)$ of a group $G$ is the smallest positive integer $m$ such that $g^m=1$ for all $g \in G$.  For finite groups, the exponent divides the order of the group.

\begin{thm}\label{thm:quasipolynomial}
Let $P \subseteq \R^d$ be a $d$-dimensional lattice polytope, $G$ be finite, and $\varphi:\Z^d \twoheadrightarrow G$ be surjective. Then for any $g \in G$, the function $[g]\ehr(P,\varphi;n)$ is quasipolynomial in $n$ of degree $d$ and period that divides $\exp(G)$.
\end{thm}
\begin{proof} 
Let $G_P:=\langle \phi(x) \colon x \in P \cap \Z^d \rangle$ be the subgroup of $G$ generated by images of lattices points in $P$. Let $q_P = \exp(G_P)$. Since $G_P$ is a subgroup of $G$, $q_P$ divides $\exp(G)$. 

Now triangulate $P$ into lattice simplices whose vertices lie in $P \cap \Z^d$. To avoid overcounting lattice points on shared faces, we turn the triangulation into a partition into half-open simplices $\Delta$ each of which has a certain number of facets removed (see, e.g., \cite{StanleyDecompositions}). 
Hence, for each $n$, the lattice points of the $n$th dilates of the simplices partitions $nP \cap \Z^d$ and hence
\[
\mathcal{E}(P,\varphi;t)=\sum_{\Delta} \mathcal{E}(\Delta,\varphi;t),
\]
where the sum is over all half-open simplices in the triangulation. Now consider a full-dimensional half-open simplex $\Delta=\conv(v_0,\ldots,v_d)$ with vertices in $P \cap \Z^d$. By Corollary~\ref{cor:ehrhart_group},
\[
\mathcal{E}(\Delta,\varphi;t) = \frac{h^*_{\varphi}(\Delta;t)}{\prod_{i=0}^d (1-\phi(v_i)t)},
\]
where $h^*_{\varphi}(\Delta;t) \in R[G][t]$ is obtained from the lattice points in the fundamental parallelepiped of the cone over $\Delta$. Each vertex $v_i$ lies in $P \cap \Z^d$ so $\phi(v_i) \in G_P$ for each $i$. Moreover, we have
\[
(1-\phi(v_i)t)(1+\phi(v_i)t+(\phi(v_i)t)^2+\cdots+(\phi(v_i)t)^{q_P-1}) = 1-t^{q_P},
\]
so multiplying the numerator and denominator of $\mathcal{E}(\Delta,\varphi;t)$ by the appropriate factor we have
\[
\mathcal{E}(\Delta,\varphi;t) = \frac{A_{\Delta}(t)}{(1-t^{q_P})^{d+1}}
\]
for some polynomial $A_{\Delta}(t) \in R[G][t]$. Summing over all half-open simplices,
\[
\mathcal{E}(P,\phi;t)=\frac{A_P(t)}{(1-t^{q_P})^{d+1}}
\]
for some polynomial $A_P(t) \in R[G][t]$. For each $g \in G$, let $A_g(t)=[g]A_P(t) \in \Z[t]$. Then for a fixed $g \in G$ we have
\[
[g]\ehr(P,\varphi;n)=[t^n] \frac{A_g(t)}{(1-t^{q_P})^{d+1}}.
\]
It follows that $[g]\ehr(P,\varphi;n)$ is given by a distinguished polynomial of degree $d$ with rational coefficients on each residue class modulo $q_P$.
\end{proof}

From Theorem~\ref{thm:quasipolynomial} and its proof we obtain the following generating function identity for counting lattice points in shifted rational polytopes. Observe that for $v=0$ and $q=1$ we regain the usual Ehrhart polynomial of a lattice polytope.
For any rational polytope $P$, the denominator of $P$ is the smallest positive integer $q$ such that $qP$ is a lattice polytope.

\begin{thm}
    \label{prop:translation}
Let $P$ be a rational polytope with denominator $q$ and let $v$ be a rational vector with denominator $q$. Then
$$\sum_{n\geq 0}|(nP-v)\cap \Z^d|t^n=\frac{h_v^*(t)}{(1-t^q)^{d+1}},$$
where $h_v^*(t)$ is a polynomial with nonnegative integer coefficients. In particular, if $P$ is a lattice polytope and $v$ an integer vector, we recover the classical Ehrhart series
$$\mathcal E(P;t) = \sum_{n\geq 0}\ehr(P;n)t^n=\frac{h^*(t)}{(1-t)^{d+1}} \, ,$$
where $h^*(t)$ is the $h^*$-polynomial of the polytope $P$.
\end{thm}

\begin{proof}
    We consider the group $G=(\Z_q)^d$ with generators $g_i$ for copies of $\Z_q$. Define the homomorphism $\phi$ as $\phi(\alpha)=\phi(\alpha_1,\ldots,\alpha_d):=\prod g_i^{\alpha_i}$, and $R=\Z$. For a rational polytope $P$ with denominator $q$ we notice that counting integer points in $nP-v$ is the same as counting points of the coset $q\Z +qv$ in $nQ$ where $Q:=qP$. Let $g=\varphi (qv)$. Then, with the notation in the proof of Theorem~\ref{thm:quasipolynomial}, $h^\ast _v=[g]A_P$. From the proof it can also be seen that for any half-open simplex $\Delta$, the numerator of $\mathcal{E}(\Delta,\varphi;t)$ is of the form $\sum_{g\in G} a _g g$ with $a_g\geq 0$ for all $g\in G$. The subsequent multiplication of the denominator and numerator, and summing over all half-open simplices in the triangulation, does not introduce negative coefficients. In particular, $[g]A_P=h^\ast _v$ has only nonnegative coefficients, and the claim follows. 
\end{proof}

In a similar but different setting, the authors of \cite{HMY} showed how Ehrhart quasi-polynomials change as the polytope~$P$ is translated by a rational vector~$v$.  In other words, they study the function
$|n(P-v)\cap \Z^d|$.

\begin{thm}\label{thm:volume}
Let $P \subseteq \R^d$ be a $d$-dimensional lattice polytope, $G$ be finite, and $\varphi:\Z^d \twoheadrightarrow G$ be surjective. As a polynomial function  in $n$, the leading coefficient of $\ehr(P,\varphi;n)$ is  \[\vol(P) \cdot \frac{1}{|G|} \sum_{g \in G} g.\] 
Equivalently, for any $g \in G$,
\[
\lim_{n \to \infty} \frac{[g]\ehr(P,\varphi;n)}{n^d}= \frac{\vol(P)}{|G|}.
\]
\end{thm}
This is curious, as the element $e_G=\frac{1}{|G|} \sum_{g \in G} g$ is a central idempotent in $R[G]$ and classically plays a significant role in understanding the representation theory of $G$.

\begin{exam}\label{ex:quasi}
Let $P=[0,1]^2$, $G=C_3=\langle \tau \rangle$ with $\tau$ a primitive third root of unity, and $R=\mathbb{Z}$. Furthermore, let $\phi:\mathbb{Z}^2 \to G$ be given by $\phi(a,b)=\tau^{a+b}$. It is immediate that $[\tau^r]\ehr(P,\varphi;n)$ counts the number of pairs $(a,b) \in \{0,1,\ldots,n\}^2$ such that $a+b \equiv r \ (\text{mod } 3)$. From this we see that the values $[\tau^r]\ehr(P,\varphi;n)$ are as in the table below,
\[
\begin{array}{c|ccc}
n & [1]\ehr(P,\phi;n) & [\tau]\ehr(P,\phi;n) & [\tau^2]\ehr(P,\phi;n) \\ \hline
& & \\
n \equiv 0 \text{ (mod } 3) & \dfrac{n^2+2n+3}{3} & \dfrac{n(n+2)}{3} & \dfrac{n(n+2)}{3} \\
& & \\
n \equiv 1 \text{ (mod } 3) & \dfrac{n(n+2)}{3} & \dfrac{n^2+2n+3}{3} & \dfrac{n(n+2)}{3} \\
& & \\
n \equiv 2 \text{ (mod } 3) & \dfrac{(n+1)^2}{3} & \dfrac{(n+1)^2}{3} & \dfrac{(n+1)^2}{3}
\end{array}
\]
We notice then that for each $r$,
\[
\frac{[\tau^r]\ehr(P,\varphi;n)}{n^2} = \frac{\dfrac{n^2}{3}+O(n)}{n^2} = \frac{1}{3} + O\left(\dfrac{1}{n}\right) \xrightarrow{n \to \infty} \frac{1}{3}, 
\]
which is compatible with \Cref{thm:volume} because $\vol(P)=1$ and $|G|=3$.

We remark to the reader a subtlety: for all elements of $G$, the coefficients of the $\phi$-Ehrhart function are integers such that there exists a (quasi)polynomial with rational coefficients that takes the same values.
\end{exam}

\begin{proof}[Proof of Theorem~\ref{thm:volume}]
For simplicity of notation, define $\psi_g(n):=[g]\ehr(P,\phi;n)$ for any $g \in G$.  Fix $a_g \in \Z^d$ as any lattice point in the preimage of $g$ under $\phi$. Then we have
\[
\psi_g(n) =|\{\alpha \in nP \cap \Z^d \colon \phi(\alpha)=g\}|= |\{\alpha \in nP \cap \mathbb{Z}^d \colon \alpha \in a_g + \ker(\varphi)\}| = |nP \cap (a_g+\ker(\varphi))|.
\]
The second equality follows from the fact that $\phi(\alpha)=g$ if and only if $\alpha \in a_g+\ker(\varphi)$.

Since $\phi$ is surjective, $\ker(\phi)$ is a full-rank sublattice of $\Z^d$ of index $|G|$. Let $B$ be a $d \times d$ integer matrix whose columns form a $\Z$-basis of $\ker(\phi)$. Then $|\det(B)|=[\Z^d:\ker(\phi)]=|G|$. 
However $\alpha \in nP \cap (a_g+\ker(\phi))$ if and only if $\alpha \in nP$ and $\alpha=a_g+Bv$ for some $v \in \Z^d$. 
The invertibility of $B$ over $\Q$  gives 
\[
\psi_g(n) = |\{v \in \Z^d \colon v \in B^{-1}(nP-a_g)\}|.
\]
Let $Q$ be the full dimensional rational polytope obtained by applying the transformation $B^{-1}$ to $P$. Then we have $B^{-1}(nP-a_g)=nQ-B^{-1}a_g$ which is a translation of $nQ$ by the vector $B^{-1}a_g \in \mathbb{Q}^d$. We now have
\[
\psi_g(n)=|(nQ-B^{-1}a_g) \cap \mathbb{Z}^d|.
\]
Then it follows from Theorem~\ref{prop:translation} that $\psi_g(n)$ is a quasipolynomial in $n$ of degree $d$. Its leading coefficient is equal to 
\[
\lim_{n\rightarrow \infty} \frac{\psi_g(n)}{n^d} = \lim_{n\rightarrow \infty} \frac{1}{n^d} \left| \left(Q-\frac{1}{n}B^{-1}a_g\right) \cap \left(\frac{1}{n}\Z\right)^d\right| = \vol(Q).
\]

Finally, $\vol(Q)=\vol(B^{-1}P)=|\det(B^{-1})|\vol(P)=\dfrac{\vol(P)}{|G|}$. The result follows.
\end{proof}

We illustrate elements of this theorem with an example. 

\begin{exam}\label{exam:quasipolynomial}
Let \[
P=\text{conv}\{(0,0),(1,0),(2,2),(0,2)\}\subseteq \mathbb R^2,\] and let $G=\mathbb Z/2\mathbb Z\oplus \mathbb Z/3\mathbb Z$. We write elements of $G$ additively, and define $\varphi:\mathbb Z^2\to G$ by $\varphi(x,y)=(x \bmod 2,\ y \bmod 3)$. This homomorphism is surjective, and the exponent of $G$ is $6$. Triangulate $P$ by drawing the diagonal from $(0,0)$ to $(2,2)$. This gives the two lattice triangles $\Delta_1=\text{conv}\{(0,0),(1,0),(2,2)\}$ and $\Delta_2=\text{conv}\{(0,0),(2,2),(0,2)\}$. These two triangles meet along the diagonal, so one should not simply add their lattice-point counts without making a convention. As in the proof of the theorem, choose a half-open triangulation by assigning the shared diagonal to $\Delta_1$ and deleting it from $\Delta_2$. Then, for every dilation factor $n$, the lattice points of $nP$ are partitioned by the corresponding half-open dilates of these two triangles. This is the elementary geometric reason that the group-valued Ehrhart series of $P$ is the sum of the group-valued Ehrhart series of the two half-open triangles.

Let us compute one coefficient directly. The dilate $nP$ is described by the inequalities $0\leq y\leq 2n$ and $0\leq x\leq n+y/2$. The coefficient of the identity element $(0,0)\in G$ counts those lattice points in $nP$ for which $x$ is even and $y$ is divisible by $3$. Such a point has the form $(x,y)=(2i,3j)$ with $i,j\geq 0$. Substituting this into the inequalities for $nP$ gives $0\leq j\leq \lfloor 2n/3\rfloor$ and $0\leq i\leq \lfloor (2n+3j)/4\rfloor$. Thus the identity coefficient is an ordinary lattice-point count with congruence conditions imposed by $\varphi$, giving rise to the polynomials below.

\[
\begin{array}{c|c}
n & [(0,0)]\ehr(P,\varphi;n) \\ \hline
n \equiv 0 \text{ (mod } 6) & (3n^2+7n+6)/6 \\
n \equiv 1 \text{ (mod } 6) & (n^2+n)/2 \\
n \equiv 2 \text{ (mod } 6) & (3n^2+5n+2)/6 \\
n \equiv 3 \text{ (mod } 6) & (3n^2+7n+6)/6 \\
n \equiv 4 \text{ (mod } 6) & (n^2+n)/2 \\
n \equiv 5 \text{ (mod } 6) & (3n^2+5n+2)/6.
\end{array}
\]
Therefore $[(0,0)]\ehr(P,\varphi;n)$ is a quasipolynomial of degree $2$ and period dividing $6$. Its leading term, written as a function of $n$, is $\frac12 n^2$. This agrees with the expected leading coefficient. Indeed, the area of $P$ is $3$, and the six congruence classes share the leading area contribution equally, giving $\text{area}(P)/|G|=3/6=1/2$. If we selected a different group element $g=(a,b)$, where $a\in\{0,1\}$ and $b\in\{0,1,2\}$, then a point mapping to $g$ has the form $(a+2i,b+3j)$ with $i,j\geq 0$. The inequalities defining $nP$ again become a pair of linear inequalities in $i$ and $j$, with floors whose behavior is controlled by residues modulo $6$. Thus every coefficient $[g]\ehr(P,\varphi;n)$ is a quasipolynomial with period dividing $\exp(G)=6$, as asserted by Theorem~\ref{thm:quasipolynomial}.
\end{exam}

\section{Weights in Lattice Points via $\phi$-Ehrhart functions} \label{sec:recoverweighted}

We turn to weighted versions of Ehrhart functions when the lattice points have an associated weight value, 
see \cite{Weighted_Bajo_etal,chapoton_2016,quasiweightsfpsac2023,deloera2024weighted} and the references therein. Here, we think of $\phi$ as a weight and recover several known results in weighted Ehrhart theory.

\subsection{Chapoton's $q$-Ehrhart}

First we recover and generalize Chapoton's $q$-Ehrhart setting~\cite{chapoton_2016}; see also \cite{bajo2026} for a recent application to $q$-chromatic polynomials of graphs.
Let $q$ be an indeterminate variable and let $(G_q, \cdot) \cong (\R,+) $ be the multiplicative group of real powers of $q$. For every linear function $\ell:\R^d\to \R$, the map $\phi_q:\Z^d\to G_q$ defined as $\phi_q(x)=q^{\ell(x)}$ is a homomorphism. Combined with Corollary~\ref{cor:ehrhart_group}, this gives the following result.

\begin{prop}[Chapoton, \cite{chapoton_2016}]
If $\ell$ is a linear form with integer coefficients which is positive on all vertices of simplex $\Delta=\conv(v_1,\ldots,v_{d+1})$, then
$\mathcal E(\Delta, \phi_q;t)$ is a rational function in $t$ and $q$ with denominator $\prod (1-q^{\ell(v_i)}t)$.
\end{prop}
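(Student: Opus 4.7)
The plan is to derive the statement as an immediate specialization of Corollary~\ref{cor:ehrhart_group}, and then observe that the numerator it produces is a polynomial in $q$ and $t$. Applying Corollary~\ref{cor:ehrhart_group} to the simplex $\Delta$ with the homomorphism $\phi_q$, and unraveling $\phi_q(v_i)=q^{\ell(v_i)}$ and $\phi_q(\beta)=q^{\ell(\beta)}$ for $\beta\in\Z^d$, one obtains
\[
\mathcal{E}(\Delta,\phi_q;t) \;=\; \frac{\displaystyle\sum_{(\beta,m)\in\Pi\cap\Z^{d+1}} q^{\ell(\beta)}\,t^m}{\displaystyle\prod_{i=1}^{d+1}\bigl(1-q^{\ell(v_i)}t\bigr)}.
\]
The denominator already matches the one claimed in the proposition, so only the numerator requires attention.

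Next, I verify that the numerator is a polynomial in $q$ and $t$. Two observations suffice: (i) the fundamental parallelepiped $\Pi$ is bounded, so $\Pi\cap\Z^{d+1}$ is a finite set and the sum has finitely many terms; (ii) since $\ell$ has integer coefficients and $\beta\in\Z^d$, every exponent $\ell(\beta)$ is an integer. Writing any $(\beta,m)\in\Pi$ as $\sum \lambda_i(v_i,1)$ with $\lambda_i\in[0,1]$, so that $\beta=\sum\lambda_i v_i$, and using linearity of $\ell$ together with the positivity hypothesis $\ell(v_i)>0$, one further gets $\ell(\beta)\geq 0$. Hence the numerator is an honest element of $\Z[q,t]$, completing the derivation.

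I do not anticipate a serious obstacle: the argument is a formal specialization. What is worth emphasizing is the division of labor between the two hypotheses on~$\ell$. Rationality over $R[G_q](\mathbf z)$ is given for free by Corollary~\ref{cor:ehrhart_group} regardless of $\ell$; the integer-coefficients assumption is what forces the exponents of $q$ to be integers (so that one can legitimately speak of a rational function in $t$ and $q$ rather than a function involving $q^{\R}$), and the positivity assumption on the vertices is what guarantees those exponents are nonnegative in the numerator, matching the form in which Chapoton originally states his theorem.
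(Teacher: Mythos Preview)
Your argument is correct and matches the paper's own proof, which simply says the result is immediate from Corollary~\ref{cor:ehrhart_group} and that the numerator is a polynomial in $t$ and $q$ because each $\ell(v_i)$ is a nonnegative integer. Your version is in fact more carefully written, separating the roles of the integrality and positivity hypotheses exactly as the paper's terse sentence implicitly does.
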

\begin{proof}
The proposition is immediate from Corollary~\ref{cor:ehrhart_group}. The numerator is a polynomial in $t$ and $q$ because $\ell(v_i)$ is a nonnegative integer.
\end{proof}

The proposition can be generalized to the case of arbitrary polytopes by using half-open triangulations. For the case of a simplex and certain special families of polytopes \cite{deloera2024weighted} the nonnegativity property for the numerator continues to hold, but it fails for general polytopes as noted in~\cite{chapoton_2016}.

Our approach does not require $\ell$ to have integer coefficients, but we forfeit the polynomiality of the numerator in exchange. In the following, let $G_q$ denote the multiplicative group of $R$-powers of a variable $q$; this group is isomorphic to the Abelian group $(R,+)$.

\subsection{Polynomial Weights}

Next, we generalize Corollary~\ref{cor:ehrhart_group} to the case when each point in $\Z^d$ is weighted by its evaluation of a given polynomial in $R[x_1,\ldots,x_d]$ in addition to the weight from $G$.

\begin{definition}
Let $w$ be a polynomial in $R[x_1,\ldots,x_d]$. We define the {\it $(\phi,w)$-Ehrhart function of $P$} in $R[G]$ (left) and the {\it $(\phi,w)$-Ehrhart series of $P$} in $R[G][[t]]$ (right) as
$$\ehr(P,\phi,w;n):=\sum_{\alpha\in nP\cap \Z^d}w(\alpha)\phi(\alpha) \quad \text{and} \quad \mathcal E(P,\phi,w;t):=\sum_{n\geq 0} \ehr(P,\phi,w;n)t^n.$$
\end{definition}

Our goal is to show that $(\phi,w)$-Ehrhart series can also be written as a rational function in $R[G](t)$. We start with an example showing how the process works and then proceed with the general case.

\begin{exam}\label{ex:triangle-polynomial}
    Let $\Delta$ be the triangle in $\R^2$ with vertices $(0,0)$, $e_1 = (1,0)$, and $e_1 + e_2 = (1,1)$ with the edge lying on $x_1 = x_2$ removed.
    Here, the fundamental parallelepiped $\Pi\subset \R^3$ contains the single integer point $(1,0,1)$.
    Let $w(x_1,x_2) = x_1$ and define $\psi: \Z^2 \to G_q \times G$ to be $\psi(x_1,x_2) = q^{x_1}\varphi(x_1,x_2)$.
    From Corollary~\ref{cor:ehrhart_group}, we may directly write
    \[
        \mathcal{E}(\Delta, \psi;t) = \sum_{n\geq 0}\left(\sum_{\alpha\in n\Delta\cap \Z^d}q^{w(\alpha)}\phi(\alpha)\right)t^n = \frac{q\varphi(e_1) t}{(1-t)(1-q\phi(e_1)t)(1-q\varphi(e_1+e_2)t)}.
    \]
    To obtain $\mathcal{E}(\Delta, \varphi, w;t)$ from this, note that every coefficient of the power series in the left-hand side and in the numerator or denominator polynomials in the right-hand side can be treated as a sum of $R$-powers of $q$ each with a coefficient from $R[G]$. 
    
    The equality stays true if we formally differentiate with respect to $q$ using the rule $\frac{d}{dq}\left(q^x\right)=xq^{x-1}$ for $x\in R$, and using the standard quotient rule $(f/g)'=(f'g-fg')/g^2$ in the right-hand side.

    Doing this, we obtain 
    $$\sum_{n\geq 0}\left(\sum_{\alpha\in nP\cap \Z^d}w(\alpha)q^{w(\alpha)-1}\phi(\alpha)\right)t^n$$
    from the power series; evaluating at $q = 1$ returns $\mathcal{E}(\Delta,\varphi,w;t)$.
    On the side of rational functions, we get
    \[
        \frac{(1-t)t\varphi(e_1)(1-\varphi(e_1)\varphi(e_1+e_2)q^2t^2)}{(1-t)^2(1-q\phi(e_1)t)^2(1-q\varphi(e_1+e_2)t)^2}.
    \]
    Evaluating at $q=1$ this time gives us the answer we seek:
    \[
        \mathcal{E}(\Delta,\varphi,w;t) = \frac{(1-t)t\varphi(e_1)(1-\varphi(e_1)\varphi(e_1+e_2)t^2)}{(1-t)^2(1-\phi(e_1)t)^2(1-\varphi(e_1+e_2)t)^2}.
    \]
\end{exam}

\begin{lem}\label{lem:weighted_group}
Let $G$ be an Abelian group, $\phi:\Z^d\to G$ be a homomorphism, and $\Delta$ be a half-open simplex with vertices in $\Z^d$. Suppose $w\in R[x_1,\ldots,x_d]$ is a homogeneous linear polynomial. Then 
the $(\phi,w)$-Ehrhart series 
$$\mathcal E(\Delta,\phi,w;t):=\sum_{n\geq 0} \left(\sum_{\alpha\in n\Delta\cap \Z^d}w(\alpha)\phi(\alpha)\right)t^n$$
can be written as a rational function in $R[G](t)$ with denominator $\prod (1-\phi(v_i)t)^2$. 
\end{lem}
\begin{proof}
Since $w$ is linear and homogeneous, $w(x)=a_1x_1+\ldots+a_dx_d$.
As in Example~\ref{ex:triangle-polynomial}, let $q$ be a variable and let $H=G_q\times G$ be the direct product of the group of $R$-powers of $q$ and $G$. Let $\psi:\Z^d\to H$ be defined as $\psi(\alpha)=q^{w(\alpha)}\phi(\alpha)$.
Repeating the argument from Example~\ref{ex:triangle-polynomial}, we can formally differentiate the rational function expression for $\mathcal{E}(\Delta,\psi;t)$ with respect to $q$ and evaluate at $q=1$ to get the claimed result.
\end{proof}

Although the rational function in Example~\ref{ex:triangle-polynomial} had cancellation in the denominator, this does not always happen.

\begin{exam}\label{ex:polynomial-weights}
Let $\Delta=\conv((1,0),(2,0),(2,1))\subset \R^2$, $w=x_1$, $G$ is the group of (integer) powers of $q$ and $\phi(x_1,x_2)=q^{x_1+x_2}$. Then
\[\begin{aligned}
\mathcal E(\Delta,\phi,w;t) &=\sum_{n\geq 0} \left(\sum_{\alpha\in n\Delta\cap \Z^2}w(\alpha)\phi(\alpha)\right)t^n
= \sum_{n\geq 0} \left(\sum_{(x_1,x_2)\in n\Delta\cap \Z^2}x_1q^{x_1+x_2}\right)t^n\\
&= \sum_{n\geq 0} \left(\sum_{x_1=n}^{2n}\sum_{x_2=0}^{x_1-n}x_1q^{x_1+x_2}\right)t^n 
=\frac{5 q^6 t^3-4 q^5 t^2-3 q^4 t^2-3 q^3 t^2+2 q^3 t+2 q^2 t+q t}{(1-q t)^2 (1-q^2 t)^2 (1-q^3 t)^2}.
\end{aligned}\]
This example shows that the denominator of the rational function cannot be simplified.
\end{exam}

\begin{thm}\label{thm:weighted-phi}
Let $G$ be an Abelian group, $\phi:\Z^d\to G$ be a homomorphism, and $\Delta$ be a half-open simplex with vertices in $\Z^d$. Suppose $w\in R[x_1,\ldots,x_d]$ is a polynomial of degree~$m$. Then 
the $(\phi,w)$-Ehrhart series 
$$\mathcal E(\Delta,\phi,w;t):=\sum_{n\geq 0} \left(\sum_{\alpha\in n\Delta\cap \Z^d}w(\alpha)\phi(\alpha)\right)t^n$$
can be written as a rational function in $R[G][[t]]$ with denominator $\prod (1-\phi(v_i)t)^{m+1}$. 
\end{thm}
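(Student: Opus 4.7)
The plan is to promote Corollary~\ref{cor:ehrhart_group} to the polynomial-weighted setting by applying an Euler-type differential operator to the multivariate $\phi$-integer point transform before specializing to $t$. This is the standard device in weighted Ehrhart theory; the work is to adapt it to the $R[G]$-coefficient context. I would extend $\phi$ to a homomorphism $\tilde\phi:\Z^{d+1}\to G$ by $\tilde\phi(\beta,k):=\phi(\beta)$, let $\mathcal{C}$ be the half-open cone over $\Delta\times\{1\}$ with generators $u_i=(v_i,1)$, and apply Theorem~\ref{lem:brion_group} to obtain
$$
\sigma(\mathcal{C},\tilde\phi;\mathbf{z})=\sum_{\alpha\in\mathcal{C}\cap\Z^{d+1}}\tilde\phi(\alpha)\,\mathbf{z}^\alpha=\frac{\sum_{(\beta,m)\in\Pi\cap\Z^{d+1}}\phi(\beta)\,\mathbf{z}^{(\beta,m)}}{\prod_{i=1}^{d+1}\bigl(1-\phi(v_i)\,\mathbf{z}^{u_i}\bigr)}.
$$

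Next, I would introduce the weight operator $D_w:=w(z_1\partial_{z_1},\ldots,z_d\partial_{z_d})$, a polynomial of degree $m$ in the pairwise commuting Euler operators $z_j\partial_{z_j}$ acting on $R[G][[\mathbf{z}]]$. Because $z_j\partial_{z_j}\mathbf{z}^\alpha=\alpha_j\mathbf{z}^\alpha$, termwise application gives
$$
D_w\,\sigma(\mathcal{C},\tilde\phi;\mathbf{z})=\sum_{\alpha\in\mathcal{C}\cap\Z^{d+1}}w(\alpha_1,\ldots,\alpha_d)\,\phi(\alpha_1,\ldots,\alpha_d)\,\mathbf{z}^\alpha.
$$
Specializing $z_1=\cdots=z_d=1$ and $z_{d+1}=t$ and using the stratification $\mathcal{C}\cap\Z^{d+1}=\bigsqcup_{n\geq 0}(n\Delta\cap\Z^d)\times\{n\}$, the right-hand side collapses to exactly $\mathcal{E}(\Delta,\phi,w;t)$.

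On the rational side, set $D(\mathbf{z})=\prod_{i=1}^{d+1}(1-\phi(v_i)\mathbf{z}^{u_i})$. The quotient rule shows that a single application of any $z_j\partial_{z_j}$ sends $N/D^k$ with $N\in R[G][\mathbf{z}]$ to $N'/D^{k+1}$ with $N'\in R[G][\mathbf{z}]$. Iterating, every monomial in $D_w$ of operator-degree $k\leq m$ contributes a term whose denominator divides $D^{m+1}$, so the sum can be brought to the common denominator $D^{m+1}$. Specialization then produces the claimed denominator $\prod_{i=1}^{d+1}(1-\phi(v_i)\,t)^{m+1}$. The main technical point — not deep but the place where care is required — is to justify that the substitution $z_1=\cdots=z_d=1$ commutes with $D_w$ on the rational expression even though $R[G]$ may have zero divisors. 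This is immediate once one observes that each factor $1-\phi(v_i)\mathbf{z}^{u_i}$ has $z_{d+1}$-constant term equal to $1$, so the entire manipulation takes place inside the module $R[G][z_1^{\pm},\ldots,z_d^{\pm}][[z_{d+1}]]$, where inversion of such factors, the action of $D_w$, and the specialization $z_i\mapsto 1$ all act termwise in each coefficient of $z_{d+1}^n$.
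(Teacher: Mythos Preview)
Your argument is correct and is essentially the paper's approach: both prove the theorem by differentiating the rational expression from Theorem~\ref{lem:brion_group}/Corollary~\ref{cor:ehrhart_group} and then specializing, tracking the denominator exponent via the quotient rule. The only cosmetic difference is that the paper introduces auxiliary variables $q_j$ by enlarging the target group to $G_q^d\times G$ and differentiates with respect to the $q_j$ before setting $q_j=1$, whereas you apply the Euler operators $z_j\partial_{z_j}$ directly to the variables already present in the multivariate transform before specializing $z_j\mapsto 1$; these two devices are interchangeable.
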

\begin{proof}
We sketch the proof following the approach of Lemma \ref{lem:weighted_group}. Again, it is enough to prove the theorem when $w$ is a monomial of degree $m$ as we can take linear combinations of the corresponding series and rational functions with coefficients in $R$ for different monomials.

We assume that $w=y_1\ldots y_m$ where each $y_i$ is a variable $x_j$ and we treat each $y_i$ as linear function returning $x_j$. We introduce $m$ different auxiliary variables $q_1,\ldots, q_m$. Let $H=G_{q_1}\times\ldots \times G_{q_m}\times G$ and $\psi(\alpha)=\left(\prod q_i^{y_i(\alpha)}\right)\phi(\alpha)$ where the groups $G_{q_i}$ are the groups of $R$-powers of $q_i$, respectively. We use Corollary~\ref{cor:ehrhart_group} to write down the power series
$$\mathcal E(\Delta,\psi;t)=\sum_{n\geq 0}\left(\sum_{\alpha\in n\Delta\cap \Z^d}\left(\prod q_i^{y_i(\alpha)}\right)\phi(\alpha)\right)t^n$$
as rational function with denominator $\prod (1-\left(\prod q_j^{y_j(v_i)} \right)\phi(v_i)t)$.

Differentiating with respect to each $q_i$ and replacing each $q_i$ with $1\in R$ finishes the proof.
\end{proof}

As with the case without polynomial weights, Theorem \ref{thm:weighted-phi} generalizes to lattice polytopes through half-open triangulations. The weighted Ehrhart theory with polynomial weights (see \cite{Weighted_Bajo_etal}) can be recovered by using the trivial group $G$ with ring $R=\R$. Similarly, we again have a Brion's theorem with polynomial weights. For a rational polyhedron $Q \subset \R^d$, homomorphism $\varphi : \Z^d \rightarrow G$, and polynomial weight $w$ with coefficients in a commutative ring $R$, let the $w$-weighted $\varphi$-integer point transform be
\[
\sigma_R(Q,\varphi,w;\mathbf{z}) = \sigma(Q,\varphi,w;\mathbf{z}) =  
\sum_{\alpha \in Q \cap \Z^d} w(\alpha)\varphi(\alpha)\mathbf{z}^\alpha.
\]
It is a multivariate rational function in variables $z_1,\dots, z_d$ with coefficients in $R[G]$.
Applying the homomorphisms and differentiation used before we can also obtain:

\begin{thm}\label{thm:weighte-phi-Brion}
Let $P$ be a rational polytope.  We have the following identity of rational functions:
\[
\sigma(P,\varphi,w;\mathbf{z}) =  \sum_{v \text{ vertex of }P} \sigma(K_{v,P}, \varphi,w; \mathbf{z}).
\]
\end{thm}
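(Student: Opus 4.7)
The plan is to reduce Theorem~\ref{thm:weighte-phi-Brion} to Theorem~\ref{thm:phi-Brion} by enlarging the homomorphism with auxiliary indeterminates and then installing the polynomial weight $w$ through formal differentiation, mirroring the derivation of Theorem~\ref{thm:weighted-phi} illustrated in Example~\ref{ex:triangle-polynomial}.

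By $R$-linearity of $\sigma(\,\cdot\,,\varphi,w;\mathbf{z})$ in $w$, it suffices to treat the case of a single monomial $w(x)=x_1^{k_1}\cdots x_d^{k_d}$. I introduce fresh commuting indeterminates $q_1,\ldots,q_d$ and form the enlarged Abelian group $\widetilde G = G\times \Z^d$ together with the homomorphism $\widetilde\varphi:\Z^d\to \widetilde G$ defined by $\widetilde\varphi(\alpha)=\varphi(\alpha)\mathbf{q}^\alpha$, where $\mathbf{q}^\alpha=q_1^{\alpha_1}\cdots q_d^{\alpha_d}$. Applying Theorem~\ref{thm:phi-Brion} to $\widetilde\varphi$ yields the identity
\[
\sigma(P,\widetilde\varphi;\mathbf{z}) \;=\; \sum_{v\text{ vertex of }P}\sigma(K_{v,P},\widetilde\varphi;\mathbf{z})
\]
of rational functions in $R[G][\mathbf{q}^{\pm}](\mathbf{z})$.

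Next, I apply the $R[G](\mathbf{z})$-linear operator $D=(q_1\partial_{q_1})^{k_1}\cdots(q_d\partial_{q_d})^{k_d}$ to both sides and specialize at $\mathbf{q}=\mathbf{1}$. Since $(q_i\partial_{q_i})q_i^{\alpha_i}=\alpha_i q_i^{\alpha_i}$, the composite operation sends each monomial $\varphi(\alpha)\mathbf{q}^\alpha\mathbf{z}^\alpha$ to $\alpha^k\,\varphi(\alpha)\mathbf{z}^\alpha=w(\alpha)\varphi(\alpha)\mathbf{z}^\alpha$. Hence the left-hand side is transformed to $\sigma(P,\varphi,w;\mathbf{z})$ and each vertex summand on the right to $\sigma(K_{v,P},\varphi,w;\mathbf{z})$, producing the claimed equality.

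The main obstacle is ensuring that $D$ and the specialization $\mathbf{q}\mapsto\mathbf{1}$ act coherently on the rational-function representatives. The denominators supplied by Theorem~\ref{thm:phi-Brion} (equivalently, by Theorem~\ref{lem:brion_group}) have the form $\prod_i\bigl(1-\varphi(u_i)\mathbf{q}^{u_i}\mathbf{z}^{u_i}\bigr)$, and these remain non-zero in $R[G](\mathbf{z})$ after setting $\mathbf{q}=\mathbf{1}$. Consequently both sides live in a localization of $R[G][\mathbf{q}^{\pm},\mathbf{z}^{\pm}]$ at a finite set of elements that are regular at $\mathbf{q}=\mathbf{1}$; on such a localization $D$ is a well-defined derivation and $\mathbf{q}\mapsto\mathbf{1}$ is a ring homomorphism, and both commute with the finite sum over vertices. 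Once this compatibility is set up, the remainder of the argument is purely formal and parallels the differentiation step in Example~\ref{ex:triangle-polynomial}.
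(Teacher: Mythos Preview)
Your proposal is correct and follows essentially the same approach the paper indicates: introduce auxiliary variables $q_j$ via an enlarged homomorphism, invoke the already-established $\varphi$-Brion identity (Theorem~\ref{thm:phi-Brion}), then install the polynomial weight by applying Euler operators $(q_j\partial_{q_j})$ and specializing $\mathbf{q}\mapsto\mathbf{1}$, exactly as in Example~\ref{ex:triangle-polynomial}. The paper itself only gestures at this with a single sentence (``Applying the homomorphisms and differentiation used before\ldots''); your write-up supplies the details the paper omits, in particular the check that the denominators $\prod_i(1-\varphi(u_i)\mathbf{q}^{u_i}\mathbf{z}^{u_i})$ remain regular at $\mathbf{q}=\mathbf{1}$ so that differentiation and specialization are legitimate on the rational-function side.
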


\begin{proof}[Sketch of the proof]
It is enough to prove the theorem when $w$ is a monomial. Similarly to the proof of Theorem~\ref{thm:weighted-phi}, we apply Theorem~\ref{thm:phi-Brion} to the appropriate group $G_{q_1}\times \ldots \times G_{q_t}\times G$ and the corresponding homomorphism $\psi$ defined as $\psi(\alpha)=\left( \prod q_i^{y_i(\alpha)}\right)\phi(\alpha)$. After that we differentiate with respect to each $q_i$ and replace $q_i$ with $1\in R$.
\end{proof}

\begin{exam}
We modify Example~\ref{exam:brion}. Let $P=[0,1]^2$, $R=\Z$, $G=\langle \tau \ | \ \tau^3=1\rangle$, $\phi:\Z^2 \to G$ be given by $\varphi(i,j)=\tau^{i+j}$, and let the weight be $w(\alpha_1,\alpha_2)=\alpha_1+2\alpha_2$.

The lattice points of $P$ are $(0,0),(1,0),(0,1),(1,1)$ so we directly have
\[
\sigma(P,\phi,w; \mathbf{z})=\tau z_1 + 2\tau z_2 + 3\tau^2 z_1z_2.
\]
Now compute $\sigma(K_{v,P},\phi,w;\mathbf{z})$ for the four vertices of $P$. The cone  generated at $(0,0)$ consists of all points in the nonnegative quadrant so we can write
\[
\sigma(K_{(0,0),P},\phi,w;\mathbf{z})=\sum_{\alpha_1,\alpha_2\geq 0}(\alpha_1+2\alpha_2)\tau^{\alpha_1+\alpha_2}z_1^{\alpha_1}z_2^{\alpha_2}.
\]

Using that 
\[\sum_{\alpha\geq 0} \tau^\alpha z_i^\alpha=\frac{1}{1-\tau z_i} \quad \text{and} \quad \sum_{\alpha\geq 0} \alpha \tau^\alpha z_i^\alpha=\frac{\tau z_i}{(1-\tau z_i)^2} \quad \]
we obtain
\[
\sigma(K_{(0,0),P},\phi,w;\mathbf{z})=\frac{\tau z_1}{(1-\tau z_1)^2(1-\tau z_2)}+\frac{2\tau z_2}{(1-\tau z_1)(1-\tau z_2)^2}=\frac{\tau z_1 +2\tau z_2-3\tau^2z_1z_2}{(1-\tau z_1)^2(1-\tau z_2)^2}.
\]

Applying a similar approach we get
\[
\sigma(K_{(1,0),P},\phi,w;\mathbf{z})=\sum_{\alpha_1\leq 1,\alpha_2\geq 0} (\alpha_1+2\alpha_2)\tau^{\alpha_1+\alpha_2}z_1^{\alpha_1}z_2^{\alpha_2}=\frac{-2+\tau z_1+\tau z_1z_2}{(1-\tau^2 z_1^{-1})^2(1-\tau z_2)^2},
\]
\[
\sigma(K_{(0,1),P},\phi,w;\mathbf{z})=\sum_{\alpha_1\geq 0,\alpha_2\leq 1} (\alpha_1+2\alpha_2)\tau^{\alpha_1+\alpha_2}z_1^{\alpha_1}z_2^{\alpha_2}=\frac{-4+3\tau z_1+2\tau z_2-3\tau^2 z_1z_2}{(1-\tau z_1)^2(1-\tau^2 z_2^{-1})^2},
\]
and
\[
\sigma(K_{(1,1),P},\phi,w;\mathbf{z})=\sum_{\alpha_1\leq 1,\alpha_2\leq 1} (\alpha_1+2\alpha_2)\tau^{\alpha_1+\alpha_2}z_1^{\alpha_1}z_2^{\alpha_2}=\frac{6-5\tau z_1-4\tau z_2+3\tau^2 z_1z_2}{(1-\tau^2 z_1^{-1})^2(1-\tau^2 z_2^{-1})^2}.
\]

Adding the four $w$-weighted $\phi$-integer point transforms for the vertex cones give $\sigma(P,\phi,w;\mathbf z)$.
\end{exam}

\section{Equivariant $\varphi$-Ehrhart theory} 
\label{sec:phiEquivariantEhrhart}
 
Suppose a finite group $H$ is acting linearly on the lattice $\Z^d$ and a lattice polytope $P \subset \Z^d$ is invariant under the $H$ action. The action of $H$ induces a natural permutation action on the set of lattice points in $nP \cap \Z^d$ for each integer $n\geq 0$. 
In \cite{StapledonEquivariant} Stapledon introduced the equivariant Ehrhart series as follows. For $n\in \N$, consider the map $H \rightarrow \N$ that sends a group element $h\in H$ to the number of fixed points of $h$ among the lattice points of $nP$.  This is a class function on $H$, i.e.,\ conjugate group elements take the same value.  If we fix $h\in H$, we can think of this as a function on $n$, which is the usual Ehrhart quasipolynomial of the rational subpolytope of $P$ consisting of points fixed by $h$. For example, the identity group element gives us the usual Ehrhart polynomial of the polytope $P$. We can use representation theory to get a unified expression for the Ehrhart polynomial or Ehrhart series that works for all elements $h\in H$.

Indeed, $H$ acts on the finite set $P \cap \Z^d$ by permuting the lattice points, and the \emph{permutation character} $\chi_P \colon H \to \mathbb{C}$ is the number of fixed points
$$\chi_P(h) = \#\{ m \in P \cap \Z^d \mid h \cdot m = m \}.$$
This function $\chi_P$ lies in the ring of class functions of $H$. It captures symmetry properties of the set of lattice points in $P$, for example, the orbit structures via the Burnside lemma
    $$\# \text{orbits of } H \text{ on } P \cap \Z^d = \frac{1}{|H|} \sum_{h \in H} \chi_P(h).$$

Let $\mathop{Rep}_\C(H)$ be the {\em representation ring} of $H$.  Additively it is the free Abelian group generated by isomorphism classes of linear representations of $H$ over $\C$, modulo the relations $[V] + [W] = [V \oplus W]$. Multiplication is given by tensor product of representations: $[V]\cdot[W] = [V \otimes W]$.  There is an algebra isomorphism from $\mathop{Rep}_\C(H)$ to $C_\C(H)$, the $\C$-valued class functions on $H$, which sends a representation to its character.  
Let $\rho : H \rightarrow \mathop{GL}(\C^d)$ denote the complex representation corresponding to the permutation action of $H$ on $\Z^d$ that we started with. Since $H$  linearly acts on $\Z^d$, the corresponding action of $H$ on $\C^d$ can be represented by integer matrices, so the character of $\rho$ has integer values and lies in the subring $C_\Z(H) \subset C_\C(H)$ of integer valued class functions on $H$. 

Crucially, we stress that the permutation action of $H$ on lattice points in $nP$ also has integer valued characters.
Recall that the permutation character $\chi_P$ encodes the permutation representation of $H$ on $P \cap \Z^d$. But now, in equivariant Ehrhart theory, one studies not just $P \cap \Z^d$, but $nP \cap \Z^d$ for all $n \in \mathbb{Z}_{\geq 1}$, and the sequence of permutation characters $\chi_{nP}$ form a family with Ehrhart-style structure. 

\begin{thm}[Theorem 5.7 of \cite{StapledonEquivariant}]
The function $\N \rightarrow C_\C(H)$ given by $n \mapsto \chi_{nP}$ is a quasi-polynomial in n of degree $\dim(P)$ and period dividing the exponent of $H$. 
\end{thm}
For each $h \in H$, the value of the character $\chi_{nP}(h)$ is equal to the number of lattice points in $n P^h$ where \[P^h := \{ x \in P \mid h \cdot x = x \}\]  is the fixed point polytope, so it is a quasi-polynomial in $n$ by  classical Ehrhart theory.  The beauty of the equivariant Ehrhart theory is that it encapsulates these quasi-polynomials for all $h\in H$ into a single quasi-polynomial over $\mathop{Rep}_\C(H)$ or $C_\C(H)$.

It then follows by Burnside Lemma that the number of orbits of $H$ on $nP \cap \Z^d$ is also a quasi-polynomial in $n$. 
We also get an \emph{equivariant Ehrhart Series}, which is a formal power series in $n$, with coefficients in the character ring:
 $$\sum_{n=0}^\infty \chi_{nP} t^n \in C_\Z(H)[[t]].$$
This is an extension of the classical Ehrhart series.

What happens when we combine equivariant theory, using the group $H$, with our group ring  coefficients $G,R$? Of course, there must be some compatibility between $G$ and $H$. We assume that the group homomorphism $\varphi : \Z^d \rightarrow G$ is $H$-invariant, i.e.,\ $\varphi(h\cdot \alpha) = \varphi(\alpha)$ for all $h \in H, \alpha \in \Z^d$.
We can combine Stapledon's equivariant Ehrhart theory~\cite{StapledonEquivariant} with our homomorphism $\varphi$ as follows. 
As we discussed
all our representations here have integer value characters, so we can tensor the $\Z$-valued class functions with $R[G]$ to work in the ring of $R[G]$-valued class functions on~$H$.

Recall that the permutation character $\chi_P(h)$ of $h \in H$ is the number of integer points in the fixed point polytope $P^h$.  In the weighted case, we count the integral fixed points with weights. For a positive integer $n$, let $\ehr(P,\varphi,H; n)$ be the class function on $H$ given by \[\ehr(P,\varphi,H; n) : h \mapsto \sum_{\alpha \in nP^h} \varphi(\alpha) \in R[G]. \]
On the other hand, consider the subring of the representation ring consisting of those with integer characters, and tensor it with $R[G]$.  In this ring, the equivariant $\varphi$-Ehrhart function $\ehr(P,\varphi,H; n)$ corresponds to 
\[\sum_{\substack{\text{orbit }\mathcal{O}\text{ under } \\ H \text{ action on } nP \cap \Z^d}} \varphi(\mathcal O)[\rho_\mathcal{O}]\]
where $\varphi(\mathcal O):=\varphi(x)$ for any $x \in \mathcal{O}$, and $[\rho_\mathcal{O}]$ is the permutation representation of $H$ acting on the orbit $\mathcal O$.  
The equivariant $\varphi$-Ehrhart series is defined following the classical case as
\[
\sum_{n \geq 0} \ehr(P,\varphi,H; n) t^n.
\]
This is a formal power series whose coefficients are $R[G]$ valued class functions of $H$. We compute this in a small example.  
\begin{exam}
    \label{example:equiv}
    Consider the same setting as in Example~\ref{ex:first example}.  Let $P = [0,1]^2$ be a unit square, 
    let $\varphi: \Z^2 \rightarrow C_3 = \langle \tau \rangle$ be given by $\varphi(x_1)=\varphi(x_2)=\tau$, and let $R=\Z$.  Let $H = C_2$ be the group of two elements acting on $\Z^2$ by reflection across the line $x_1=x_2$, and $\rho : H \rightarrow \mathop{GL}_2(\C)$ be the corresponding complex representation.  We will use $1$ and $\varepsilon$ to denote the two isomorphism classes of irreducible representations of $H$ in $\mathop{Rep}(H)$, with $1$ being the trivial representation, and $\varepsilon^2 = 1$.

We obtain the weighted equivariant Ehrhart series by counting the lattice points in the fundamental domain $\conv\{(0,0),(n,0),(n,n)\}$.  The lattice points on the diagonal are fixed points, each contributing a trivial representation, while the other lattice points come in size two orbits, each contributing $1+\varepsilon$.  From the multivariate Ehrhart series of the fundamental triangle
\(
\sum_{n\geq 0}\sum_{\alpha \in nP \cap \Z^2} x^\alpha t^n = \frac{1}{(1-t)(1-x_1 t)(1-x_1 x_2 t)}
\)
we substitute $x_1,x_2$ with $\tau$ and first count all lattice points with weight $1+\varepsilon$, then subtract away the contribution of $\varepsilon$ by the diagonal, to obtain the equivariant weighted Ehrhart series
\begin{equation}
\label{eqn:weightedEquivExample}
\frac{1+\varepsilon}{(1-t)(1-\tau t)(1-\tau^2 t)} - \frac{\varepsilon}{(1-t)(1-\tau^2 t)} = \frac{1+\varepsilon \tau t}{(1-t)(1-\tau t)(1-\tau^2t)}.
\end{equation}

Alternatively, we subdivide $P$ into two $H$-invariant unimodular triangles:
\[
    T_1 = \conv\{(0,0),(0,1),(1,0)\} \qquad \text{and}\qquad 
    T_2  = \conv\{(1,1),(0,1),(1,0)\}.
\]
We adapt \cite[Proposition~6.1]{StapledonEquivariant} to our case, to obtain the Ehrhart series for $T_1$ and $T_2$ respectively as
 \begin{align*}
 \frac{1}{(1-t)\det(I-\rho \tau t)} &= \frac{1}{(1-t)(1-\tau t)(1-\varepsilon \tau t)}, \text{ and } \\
  \frac{1}{(1-\tau^2 t)\det(I-\rho \tau t)} &= \frac{1}{(1-\tau^2t)(1-\tau t)(1-\varepsilon \tau t)}.
 \end{align*}
The factors $(1-t)$ and $(1-\tau^2t)$ correspond  to the fixed points $(0,0) \in T_1$ with weight~$1$ and $(1,1)\in T_2$ with weight $\tau^2$ respectively. The weighted equivariant Ehrhart series of the square is, via inclusion-exclusion,  the sum of these two rational functions minus \(\frac{1}{(1-\tau t)(1-\varepsilon \tau t)}\) for the common edge between the two triangles.
Multiplying both numerator and denominator by $(1+\varepsilon \tau t)$ and simplifying gives us
 \(\frac{1+\varepsilon \tau t}{(1-t)(1-\tau t)(1-\tau^2t)}
\) as in~\eqref{eqn:weightedEquivExample}.
\end{exam}
 In general, a weighted analogue of \cite[Proposition~6.1]{StapledonEquivariant} for $H$-invariant simplices has the product of $\det(I-\rho \,\varphi(\mathcal{O}) t)$ over all orbits $\mathcal{O}$ of vertices for the denominator and the weighted sum of lattice points in the half-open parallelepiped for the numerator.

\section{Concluding remarks}
We conclude the paper with a few remarks: 

\subsection{Volume using lattice points}

The leading coefficient of the classical Ehrhart polynomial of the $d$-dimensional polytope $P$ is well-known to be $\vol (P)$. We can interpret this property as equality $|nP\cap \Z^d|=\vol(P)n^d+O(n^{d-1})$. If the homomorphism $\phi:\Z^d\longrightarrow G$ is surjective, then Theorem~\ref{thm:volume} establishes that the preimage of each element $g\in G$ contains $\frac{n^d}{|G|}\vol(P)+o(n^d)$ points in $nP\cap \Z^d$.

The same result can be obtained from the point of view of the existence of a bounded transport. For each $g\in G$, the complete preimage $\phi^{-1}(g)\subset \Z^d$ is a sublattice of $\Z^d$ of density $\frac{1}{|G|}$. Each two complete preimages differ by translation and it follows that for each complete preimage, $|nP\cap \phi^{-1}(g)|=\frac{n^d}{|G|}\vol(P)+O(n^{d-1})$ where the error comes from the number of points of $\Z^d$ in a $t$-neighborhood of the boundary of $nP$ for a fixed $t$.

We refer to \cite{frettloeh2022,laczkovich1992} for more background and detailed discussion on the connection between the following two properties: (a) existence of a bounded transport between discrete $S\subset \R^d$ and a lattice, and (b) counting points of $S$ in growing regions.

\subsection{Computational complexity}

Our paper showed that changing Ehrhart functions via homomorphism $\phi : G \to \mathbb{R}$ and using group ring $R[G]$ coefficients extends classical results and shows new ones. Some natural questions about computation arise: How can we compute the $\phi$-Ehrhart series efficiently? For the classical choice $G=\{1\}, R=\Z$ it is known that, if the dimension $d$ is fixed, then the polynomial $\ehr(P;n)$ can be found in polynomial time using Barvinok's algorithm \cite{barvinok-1993:exponential-sums}; but what happens with other choices of group and ring? We suspect that similar results occur in the $\phi$-Ehrhart setting. Similarly, can computation of the Ehrhart series be done directly for other group rings, without first computing the classical Ehrhart series and then taking the $\phi$ map?

If dimension $d$ is part of the input, then deciding whether $|P\cap \Z^d|$ is positive for rational $P$ is NP-complete, and finding parity of $|P\cap \Z^d|$ is NP-complete, which follows from the NP-completeness of 3-SAT \cite{karp}. We expect that a similar property will hold for the complexity of counting lattice points modulo a prime, but is it easier than counting over $\mathbb{Z}$? 

Finally, in addition to the mentioned questions and open problems, we are also interested in further applications of the $\phi$-Ehrhart theory we introduced here and its connections with other generalizations. For example, is there a relation to the harmonics and graded Ehrhart theory studied in~\cite{reiner2024harmonicsgradedehrharttheory}?---\textit{Quo vadis, Ehrhart theory?}

\section{Acknowledgements}
Jes\'us A. De Loera was partially supported by NSF-DMS grants 2348578 and 2434665. Alexey Garber was partially supported by the Simons Foundation. Katharina Jochemko was partially supported by  grant nr 2023-04063 from 
the Swedish research council and the Verg Foundation. Mohamed Omar was partially supported by research funds from York University, and NSERC Discovery Grant \#RGPIN-2025-06304. Josephine Yu was partially supported by NSF-DMS grant 2348701. The authors thank the American Institute of Mathematics and its SQuaREs program for support and hospitality. This research was initiated there. 
The authors also thank Sof{\'\i}a Garz\'on Mora for many discussions on 
the topic and Greta Panova for discussion on computational complexity of lattice point counting.

\bibliographystyle{plain}

\bibliography{bibliography}


\end{document}